\newtheorem{thm}{Theorem}[section]
\newtheorem{lem}[thm]{Lemma}
\declaretheoremstyle[bodyfont=\normalfont,qed=$\square$]{remark}
\declaretheorem[style=remark,numberlike=thm,name=Remark]{rem}
\numberwithin{equation}{section}
\renewcommand{\a}{\alpha}
\renewcommand{\b}{\beta}
\newcommand{\e}{\varepsilon}
\newcommand{\de}{\delta}
\newcommand{\si}{\sigma}
\renewcommand{\t}{\tau}
\newcommand{\De}{\Delta}
\newcommand{\Ga}{\Gamma}
\def\R{{\mathbb{R}}}
\def\N{{\mathbb{N}}}
\def\Z{{\mathbb{Z}}}
\def\T{{\mathbb{T}}}
\newcommand{\ue}{u^\e}
\renewcommand{\div}{\operatorname{div}}
\renewcommand{\tilde}{\widetilde}
\renewcommand{\hat}{\widehat}
\title{Constant-speed interface flow from unbalanced Glauber-Kawasaki dynamics}
\author{Tadahisa Funaki$\,^{1)}$, Patrick van Meurs$\,^{2)}$, Sunder Sethuraman$\,^{3)}$ and Kenkichi Tsunoda$\,^{4)}$}
\begin{document}
\maketitle

\begin{abstract}
We derive the hydrodynamic limit of Glauber-Kawasaki dynamics.
The Kawasaki part is simple and describes independent movement of the particles
with hard core exclusive interactions. It is speeded up in a diffusive space-time scaling. The Glauber part describes the birth and death of particles. It is set to favor two levels of particle density with a preference for one of the two. It is also speeded up in time, but at a lesser rate than the Kawasaki part. Under this scaling, the limiting particle density instantly takes either of the two favored density values. The interface which separates these two values evolves with constant speed (Huygens' principle).

Similar hydrodynamic limits have been derived in four recent papers. The crucial difference with these papers is that we consider Glauber dynamics which has a preferences for one of the two favored density values. As a result, we observe limiting dynamics on a shorter time scale, and the evolution is different from the mean curvature flow obtained in the four previous papers. While several steps in our proof can be adopted from these papers, the proof for the propagation of the interface is new. 

\footnote{
\hskip -6mm 
${}^{1)}$ 
Beijing Institute of Mathematical Sciences and Applications, Huairou district,
Beijing, China.
e-mail: funaki@ms.u-tokyo.ac.jp \\
${}^{2)}$ Faculty of Mathematics and Physics, Kanazawa University, Kakuma, Kanazawa 920-1192, Japan.
e-mail: pjpvmeurs@staff.kanazawa-u.ac.jp \\
${}^{3)}$ Department of Mathematics, University of Arizona,
621 N.\ Santa Rita Ave., Tucson, AZ 85750, USA. e-mail: sethuram@math.arizona.edu\\
${}^{4)}$ Faculty of Mathematics, Kyushu University, 744 Motooka, Nishi-ku, Fukuoka, 819-0395, Japan. e-mail: tsunoda@math.kyushu-u.ac.jp}
\footnote{
\hskip -6mm
Keywords: Hydrodynamic limit, Huygens' principle, Glauber-Kawasaki dynamics, Sharp interface limit.}
\footnote{
\hskip -6mm
Abbreviated title $($running head$)$: Interface flow from Glauber-Kawasaki dynamics}
\footnote{
\hskip -6mm
2020MSC: 60K35, 82C22, 74A50.}
\end{abstract}

\paragraph{Acknowledgements}

TF was supported in part by JSPS KAKENHI Grant Number JP18H03672.
PvM was supported by JSPS KAKENHI Grant Number JP20K14358. 
SS was supported by grant ARO W911NF-181-0311.
KT was supported by JSPS KAKENHI Grant Number JP18K13426 and JP22K13929.


\section{Introduction}

This paper fits to the recent paper series \cite{EFHPS,EFHPS-2,FvMST,FT} on hydrodynamic limits of interacting particle systems. In these papers the particle systems are such that the particle density on the macroscopic scale tends to be close to either of two distinct positive values $0 < \alpha_- < \alpha_+$. The interface which separates these two values turns out to move in time by the mean curvature flow. The contribution of the present paper to the series is that we consider a change in the setting of the particle system for which the hydrodynamic limit is \textit{different} from the mean curvature flow.

\subsection{The Glauber-Kawasaki dynamics}
\label{s:intro:GK}

For the interacting particle system we consider the Glauber-Kawasaki dynamics.
These dynamics are described by an exclusion process where the particles move on the $d$-dimensional discrete torus $\T_N^d = (\Z/N\Z)^d=\{1,2,\ldots,N\}^d$ of size $N$, where $d \geq 1$ is fixed. The Glauber part describes the birth and death of the particles. The Kawasaki part is simple and
describes the exchange of occupancies between neighboring sites.

In more detail, the particle configuration space is $\mathcal{X}_N = \{0,1\}^{\T_N^d}$ with elements denoted by $\eta=\{\eta_x\}_{x\in\T_N^d}$. We interpret $\eta_x = 1$ as the presence of a particle at site $x$ and $\eta_y = 0$ as that site $y$ is vacant. The generator of the Glauber-Kawasaki dynamics is given by 
\begin{equation} \label{LN}
  L_N = \frac{ N^2 }{\sqrt K} L_K+\sqrt K L_G,  
\end{equation}
for a given positive number $K>1$. The generators of the Kawasaki and Glauber part are given for any function $f:\mathcal{X}_N\to\R$ by
\begin{align} \label{LK}
L_Kf (\eta)   
&  = \frac12 \sum_{ \substack{ x, y\in\T_N^d \\ |x-y|=1 }}
\left\{  f\left(  \eta^{x,y}\right)  -f\left(\eta\right)  \right\},   \\\notag
L_Gf (\eta)  
&  =\sum_{x\in\T_N^d} c_x(\eta)
  \left\{  f\left(  \eta^x \right)  -f\left(  \eta\right)  \right\},
\end{align}
respectively, where $\eta^{x,y} \in \mathcal{X}_N$ is the  configuration $\eta$ after an exchange happens between $x$ and $y$, i.e.\ 
\[
  (\eta^{x,y})_z 
  = \left\{ \begin{array}{ll}
    \eta_y
    &\text{if } z = x \\
    \eta_x
    &\text{if } z = y \\
    \eta_z
    &\text{otherwise,}
  \end{array} \right.
\]
and $\eta^x  \in \mathcal{X}_N$ is the configuration after a flip (i.e.\ birth or death of a particle) happens at $x$, i.e.\ 
\[
  (\eta^x)_z
  = \left\{ \begin{array}{ll}
    1-\eta_x
    &\text{if } z = x \\
    \eta_z
    &\text{otherwise.}
  \end{array} \right.
\]
Finally, the symbol $c_x(\eta) \geq 0$ in $L_G$ is the flip rate, which may depend both on the site $x$ and on the configuration $\eta$.
In order to describe the assumptions on $c_x$, we set $\mathcal X$ as the unbounded configuration space $\{0,1\}^{\Z^d}$.
We introduce $\t_x: \mathcal{X} \to \mathcal{X}$ (or $\mathcal{X}_N \to \mathcal{X}_N$)
as the translation of $\eta$ by $x\in \Z^d$ (or $\in \T_N^d$) defined by
$(\t_x\eta)_z= \eta_{z+x}$ for $z\in \Z^d$ (or $\in \T_N^d$, in which case $x+z$ is taken modulo $N$). For a function $f=f(\eta)$ on $\mathcal{X}$ or $\mathcal{X}_N$, 
we denote $\t_xf(\eta) \equiv f(\t_x\eta)$.
We assume that $c_x(\eta) = \t_x c(\eta)$ for some non-negative 
local function $c = c_0$ on $\mathcal{X}$ (regarded as that on
$\mathcal{X}_N$ for $N$ large enough). Since $\eta_0$ takes values in $\{0,1\}$, $c(\eta)$ can be decomposed as
\begin{equation*}
c(\eta) = c^+(\eta)(1-\eta_0)+c^-(\eta) \eta_0
\end{equation*}
for some local functions $c^\pm$ which do not depend on $\eta_0$. The remaining assumptions on $c$ are given below by (BS) and (UB).

In addition to $c_x(\eta)$, one could consider an exchange rate $c_{x,y}(\eta)$ in the Kawasaki dynamics (called speed change); see e.g.\ \cite{FvMST}. In the present paper we simply take $c_{x,y}(\eta) \equiv 1$, i.e.\ occupancy exchanges between any two sites $x$ and $y$ are equally likely to happen, irrespective of the configuration $\eta$. We motivate this choice in Section \ref{s:intro:disc} below.

The scaling of the constants $N$ and $K$ in \eqref{LN} is such that we obtain nontrivial dynamics in the hydrodynamic limit. While we keep $K$ constant for now, we will consider $K=K(N)\to\infty$ as $N\to\infty$ later to observe a sharp interface in the limit. 
We write $\sqrt{K}$ rather than $K$ to be consistent with the setting in the
previous papers \cite{EFHPS,EFHPS-2,FvMST,FT}. 

To describe the remaining assumptions on the flip rate $c$, we keep $K\ge 1$ independent of $N$ for the moment. Then, as shown in \cite{DFL},
the empirical density (see \eqref{alN} below) of the process $\eta^N(t)$ generated by $L_N$ converges to the particle density profile $u^\e$, which is the solution
of the reaction-diffusion equation (of Allen-Cahn type) given by 
\begin{align} \tag{$P^\e$}
\label{Pe}
	\begin{cases}
	\partial_t u^\e
	= \e \Delta u^\e
	+ \displaystyle{ \frac{1}{\varepsilon}} f( u^\e)
	&\mbox{ in } (0,\infty)\times \T^d \\
    u^\e(0,v) = u_0(v)
	&\text{ for } v \in \T^d,
	\end{cases}	
\end{align}
where $\T^d$ is the $d$-dimensional continuous torus $(\R/\Z)^d=[0,1)^d$,
$u_0$ is an initial condition,
\[
  \varepsilon \equiv \frac1{\sqrt K} > 0
\] 
and $f$ is defined in terms of $c$ by 
\begin{align*}
  f(u) 
&\equiv E^{\nu_u}[(1-2\eta_0) c(\eta)]    \\\notag
&=  E^{\nu_u}[c^+(\eta) (1-\eta_0) -c^-(\eta) \eta_0] \\ \notag
& = (1-u) E^{\nu_u}[c^+(\eta)] - u E^{\nu_u}[c^-(\eta)].
\end{align*}
Here, $\nu_u$ is the Bernoulli measure on $\mathcal{X}$ with mean $u\in [0,1]$.
Note that, since $c$ is a local function, $f$ is a polynomial. We assume that $f$ satisfies the following properties (see Figure \ref{fig:f} for an example):  

\begin{itemize} 
  \item[(BS)] $f$ is bistable, i.e.\ $f \in C^2([0,1])$ has exactly three zeros $f(\alpha_-) = f(\alpha_+) = f(\alpha_*) = 0$ with $0<\alpha_- < \alpha_* < \alpha_+ < 1$, 
\begin{align*} 
	f'(\alpha_-) < 0, \quad f'(\alpha_+) < 0, \quad f'(\alpha_*) > 0.
\end{align*}
\item[(UB)] $f$ is \textit{unbalanced}, i.e.\ we assume
\begin{equation*}
  \int_{\a_-}^{\a_+}f(u)du \neq 0.
\end{equation*}
For convenience, we fix the sign as 
\begin{equation} \label{UB}
  \int_{\a_-}^{\a_+}f(u)du>0.
\end{equation}
\end{itemize}
This completes the list of assumptions on the flip rate $c$. The difference in these assumptions with respect to the previous papers \cite{EFHPS,EFHPS-2,FvMST,FT} is (UB); in those papers $f$ is instead balanced, i.e.\ the integral in (UB) is zero.
 Considering the unbalanced case has a significant effect on the macroscopic behavior of the Glauber-Kawasaki dynamics. Under (UB), the macroscopic dynamics are asymptotically faster than in the balanced case. Hence, in the present paper we slow down time to a shorter time scale in order to derive the macroscopic dynamics. In Section \ref{s:time:rescale} we describe in detail that our setting can be translated to the setting in the previous papers in terms of a rescaling of time.
 
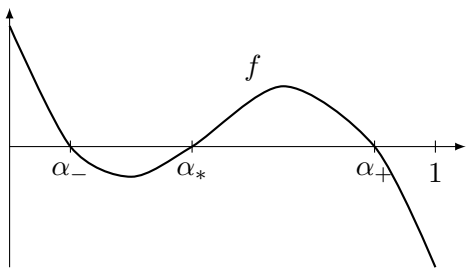
\begin{figure}[ht]
\centering
\begin{tikzpicture}[scale=.8, >= latex]
  \def \r {0.4}
  \def \lgray {black!20!white}
  
  \draw[->] (0,0) --++ (7.5,0);
  \draw[->] (0,-2) --++ (0,4.3);
    
  \draw [thick] plot [smooth] coordinates {(0,2) (1,0) (2,-.5) (3,0) (4.5,1) (6,0) (7, -2)};
  
  \draw (1,.1) --++ (0,-.2) node[below]{ $\alpha_-$};
  \draw (3,.1) --++ (0,-.2) node[below]{ $\alpha_*$};
  \draw (6,.1) --++ (0,-.2) node[below]{ $\alpha_+$};
  \draw (4,1.3) node{ $f$};
  \draw (7,.1) --++ (0,-.2) node[below]{$1$};
\end{tikzpicture} \\
\caption{An example of a function $f$ which satisfies all required properties.}
\label{fig:f}
\end{figure} 

While the assumptions on $c$ are similar in the previous papers \cite{EFHPS,EFHPS-2,FvMST,FT}, the assumptions on the exchange rates are different. \cite{FT} considers the same generator $L_K$ as in \eqref{LK}. \cite{FvMST} considers more general exchange rates $c_{x,y}(\eta)$ for which the diffusion term $\e \Delta u^\e$ in the corresponding problem \eqref{Pe} may result in the (typically) nonlinear diffusion term $\e \Delta P(u^\e)$ for some polynomial $P$. Finally, \cite{EFHPS,EFHPS-2} consider, instead of $L_K$, the generator of a zero-range process. In that case, the corresponding problem \eqref{Pe} has a similar nonlinear diffusion term as in \cite{FvMST}, and in addition (because the zero-range process is not an exclusion process) the density $u^\e$ need not be bounded from above by $1$. We comment below in Section \ref{s:intro:disc} on the result of these different choices for $L_K$ on the hydrodynamic limit.

\subsection{The limiting equation: Huygens' principle} 
\label{s:intro:P0}

We turn back to our point of interest, which is the hydrodynamic limit of the Glauber-Kawasaki process $\eta^N(t)$ for when $K = K(N) \to \infty$ as $N \to \infty$. It will turn out that this limiting equation is given by the limiting problem of \eqref{Pe} as $\e \to 0$. To describe it, let us examine \eqref{Pe} first. \eqref{Pe} is well-studied; see e.g.\ \cite{Fi,FH,F16,Ga}. 
It satisfies a comparison principle (see Lemma \ref{l:CP:Pe}) and possesses a unique, classical solution $\ue$. Moreover, the evolution of $u^\e$ is as follows. First, on a small time interval of size $O(\e)$, the reaction term $\e ^{-1}f(u^\e)$ dominates such that the PDE is well
approximated by the ordinary differential equation $u_t = \frac1\e f(u)$.
Due to the bistable nature of $f$, $\ue$ approaches
either of the values $\alpha_-$ or $\alpha_+$, and a diffusive interface is formed between the
regions $\{\ue\approx \alpha_-\}$ and $\{\ue\approx \alpha_+\}$. Once such an
interface is developed, the diffusion term $\e \Delta u$ becomes large
near the interface, and comes to balance with the reaction term so
that the interface starts to propagate beyond $t = O(\e)$ on an $O(1)$ time scale. 

In the limit $\e \to 0$, the diffusive interface of $u^\e$ converges to a sharp interface given by a hypersurface $\Gamma_t$ in $\T^d$, whose evolution is governed by the Huygens' principle
\begin{equation} \tag{$P^0$}
\label{P0}
 \begin{cases}
 \, V= c_*
 \quad \text { on } \Gamma_t \vspace{3pt}\\ 
 \, \Gamma_t\big|_{t=0}=\Gamma_0\,,
\end{cases}
\end{equation} 
where $c_* > 0$ is the speed of the traveling wave and $V$ is the normal velocity of $\Gamma_t$ from the $\alpha_+$-side to the 
$\alpha_-$-side; cf.\ \cite{BSS,Ga}. Figure \ref{fig:Gammat} illustrates the setting. The traveling wave speed $c_*$ is determined implicitly in terms of $f$ (or, equivalently, in terms of $c$); see \eqref{U} below and the discussion that follows. The sign of $c_*$ is determined by the assumption (UB); if the integral in \eqref{UB} would instead be negative, then $c_* < 0$ and the results in this paper apply with obvious modifications. 

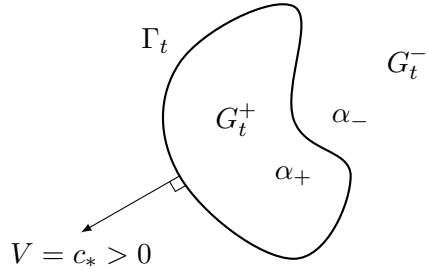
\begin{figure}[ht]
\centering
\begin{tikzpicture}[scale=1.5, >= latex]
\def \rr {0.04} 


\draw [thick] plot [smooth cycle, tension=.7] coordinates {(0,0) (0,1) (1,1.5) (1,.5) (1.5, 0) (1, -0.732)};

\draw[->, rotate = 210] (0,0) -- (1,0) node[below] {$V = c_* > 0$};
\draw[rotate = 210] (0,.1) --++ (.1,0) --++ (0,-.1); 
\draw (0, 1) node[anchor = south east] {$\Gamma_t$};
\draw (.5,.5) node {$G_t^+$};
\draw (1,0) node {$\alpha_+$};
\draw (1.5,.5) node {$\alpha_-$};
\draw (2,1) node {$G_t^-$};
\end{tikzpicture} \\
\caption{A situation sketch of \eqref{P0}.}
\label{fig:Gammat}
\end{figure}

Also away from the interface, the solution $u_\e$ converges as $\e \to 0$. Its limit is given by the step function
\begin{equation*} 
\chi_{\Gamma_t}(v) \equiv \begin{cases}
  \alpha_- &{\rm if \ } v \in \overline{G_t^-} \\
  \alpha_+ &{\rm if \ } v \in {G_t^+} 
\end{cases}
\end{equation*}
for all $v \in \T^d$, where $G_t^-$ and $G_t^+$ are the open regions of $\T^d$ separated by $\Gamma_t$; see Figure \ref{fig:Gammat}. To fix the sign, we take $G_t^-$ as the region corresponding to  $\{\ue\approx \alpha_-\}$ and $G_t^+$ as the region corresponding to $\{\ue\approx \alpha_+\}$. In terms of $G_t^+$, the solution to \eqref{P0} is given by
\begin{equation}  \label{eq:Gt}
\Ga_t= \partial G_t^+,
\qquad 
G_t^+ = \{v \in \T^d; \text{dist}\, (v, G_0^+)<c_* t\},
\end{equation}
where $\text{dist}\, (v,G_0^+)= \inf_{w\in G_0^+}|v-w|$.

\subsection{Main result: the hydrodynamic limit}

Our main result (Theorem \ref{t:main} below) on the hydrodynamic limit states that the process $\eta^N(t)$ converges to $\chi_{\Gamma_t}$ as $N \to \infty$ whenever $\eta^N(0)$ and $\chi_{\Gamma_0}$ are sufficiently close to each other. In order to turn this into a precise statement, we first introduce a proper scaling in which $\eta^N(t)$ can be compared with $\chi_{\Gamma_t}$. With this aim, we associate to any configuration $\eta \in \mathcal X_N$ the macroscopically scaled empirical measure $\alpha^N$ on $\T^d$ defined by 
\begin{align} \label{alN}
& \a^N(dv;\eta) = \frac1{N^d} \sum_{x\in \T_N^d}
\eta_x \de_{x/N}(dv),\quad v \in \T^d.
\end{align}
For the process $\eta^N(t)$, we set
\begin{align*}
& \a^N(t,dv) = \a^N(dv;\eta^N(t)), \quad t\ge 0.
\end{align*}

While both $\a^N(t, \cdot)$ and $\chi_{\Gamma_t}(\cdot)$ 
(more precisely, $\chi_{\Gamma_t}(\cdot) dv$)
are measures on $\T^d$, we need further preparation to define what it means for the initial conditions to be close. Let $\Gamma_0$ be given and let $\mu^N$ be an initial distribution for the process $\eta^N(\cdot)$. For technical reasons, we start from a regular alternative $u_0$ to the step function $\chi_{\Gamma_0}$. We interpret $u_0:\T^d \to (0,1)$ as the initial condition from \eqref{Pe}, and connect it to $\Gamma_0$ by requiring only that $\Gamma_0$ is the $\alpha_*$-level set of $u_0$, i.e.\  
\[
  \Ga_0 = \{v\in \T^d; u_0(v)=\a_*\}.
\]
Then, the regions $G_0^-$ and $G_0^+$ are determined by $\{ u_0 < \alpha_* \}$ and $\{ u_0 > \alpha_* \}$ respectively. We project $u_0$ onto the discrete torus by 
\begin{equation} \label{u0N}
  u_0^N(x) \equiv u_0(x/N)
  \qquad \text{for all } x \in \T_N^d.
\end{equation}
To $u_0^N$ we associate the (inhomogeneous) product measure 
\begin{equation} \label{nu0N}
  \nu_0^N \equiv \nu_{u_0^N(\cdot)} 
\end{equation}
on the configuration space $\mathcal X_N$, where 
\begin{equation} \label{nuu}
  \nu_{u(\cdot)}(\eta) \equiv \prod_{x \in \T_N^d} \nu_{u(x)} (\eta_x)
\end{equation}
is a product measure on $\mathcal X_N$ for any function $u : \T_N^d \to [0,1]$. 
Here, $\nu_{u(x)}$ is the measure on $\{0,1\}$ with mean $u(x)$. 
Now, note that $\nu_0^N$ and $\mu^N$ are both measures on $\mathcal X_N$. To measure how close they are, we use the relative entropy, which is defined for any two measures $\mu, \nu$ on $\mathcal X_N$ with $\nu$ having full support by
$$
H(\mu|\nu) \equiv \int_{\mathcal{X}_N} \frac{d\mu}{d\nu} \log \frac{d\mu}{d\nu} \, d\nu.
$$ 

With these connections between $\Gamma_0$, $u_0$, $u_0^N$ and $\nu_0^N$, we are ready to state our assumption on the initial conditions $\mu^N$ and $\Gamma_0$:
\begin{itemize}
\item [(BIP)] $H(\mu^N | \nu_0^N) = O(N^{d - \epsilon})$ as $N \to \infty$ for some $\epsilon > 0$ and for some $u_0 \in C^4(\T^d)$ which satisfies:
  \begin{itemize}
      \item $\Gamma_0 = \{ v \in \T^d : u_0(v) = \alpha_* \}$ is a closed $(d-1)$-dimensional hypersurface of class $C^4$ without boundary, where $\alpha_*$ is defined in (BS),
      \item $\displaystyle 0 < u_- \equiv \min_{v \in \T^d} u_0 (v) < \alpha_* < u_+ \equiv \max_{v \in \T^d} u_0 (v) < 1$, and
      \item $\nabla u_0 (v) \cdot {n}(v) \neq 0$ for any $v \in \Gamma_0$, 
      where ${n}(v)$ is the normal direction to $\Ga_0$ at $v$.
    \end{itemize}  
\end{itemize} 
Our motivation for (BIP) is as follows. On the one hand, $\mu^N$ has to be close to a product measure related to a regular density profile $u_0$. On the other hand, $u_0$ is relatively free to choose, in the sense that away from the level set $\Gamma_0 =  \{ u_0 = \alpha_* \}$, it can attain any value as long as it remains away from either $0$, $\alpha_*$ or $1$. This freedom in the choice of $u_0$ comes from the property of \eqref{Pe} that $u^\e$ will evolve towards $\chi_{\Gamma_0}$ in $O(\e)$ time.

Since under (BIP) $\Gamma_0$ is of class $C^4$, it follows from \cite{Fo} that there exists $T > 0$ such that $\Gamma_t$ is also of class $C^4$ for all $t \in [0,T]$ (see Section \ref{s:pf:PDE:prop} for details). We keep this time horizon $T$ fixed in the sequel. Finally, given the initial distribution $\mu^N$ for the process $\eta^N(\cdot)$, we denote by $\mathbb P^{\mu^N}$
the process measure with respect to $\eta^N(\cdot)$. 

\begin{thm} [Hydrodynamic limit] \label{t:main}
Assume the conditions {\rm (BS), (UB)} and {\rm (BIP)}. Then, there exists a constant $\delta > 0$ such that if $K = K(N) \to \infty$ as $N \to \infty$ with $K \leq \delta \sqrt{ \log N }$ and $K$ increasing in $N$, then
\begin{equation*}
   \lim_{N \to \infty} \mathbb P^{\mu^N} \big( \big| \langle \alpha^N(t), \phi \rangle - \langle \chi_{\Gamma_t}, \phi \rangle \big| > \gamma \big) = 0
 \end{equation*} 
 for all $t \in (0, T]$, all $\gamma > 0$ and all $\phi \in C^\infty(\T^d)$.
\end{thm}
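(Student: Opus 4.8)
The plan is to combine two ingredients: the sharp-interface asymptotics of \eqref{Pe} as $\e\to0$, and the relative-entropy method comparing $\eta^N(\cdot)$ with the inhomogeneous product measure built from $\ue$, in the spirit of \cite{FT,FvMST}. Throughout one sets $\e=\e(N)=K(N)^{-1/2}\to0$, writes $\mu_t^N$ for the law of $\eta^N(t)$ under $\P^{\mu^N}$, and lets $\ue$ solve \eqref{Pe} with the initial datum $u_0$ from {\rm (BIP)}; the reference measure is $\nu_t^N\equiv\nu_{\ue(t,\cdot)}$, i.e.\ the product measure on $\mathcal{X}_N$ with marginals $\ue(t,x/N)$ in the notation of \eqref{nuu}. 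Three statements need to be proved: (a) $\ue(t,\cdot)\to\chi_{\Ga_t}$ in $L^1(\T^d)$ as $\e\to0$, uniformly for $t\in[0,T]$, with $\Ga_t$ as in \eqref{P0}, together with quantitative control of $\ue$ (layer width, bulk values, gradients); (b) $H(\mu_t^N\,|\,\nu_t^N)=o(N^d)$ uniformly on $[0,T]$; and (c) the passage from (a) and (b) to the conclusion.

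For (a) I would first establish \emph{generation} of the interface: on $[0,C\e|\log\e|]$ the reaction term dominates, and by bistability {\rm (BS)} of $f$ and the assumptions on $u_0$ in {\rm (BIP)}, $\ue$ is driven by the ODE $\dot u=\e^{-1}f(u)$ to $\a_-$ on $G_0^-$ and to $\a_+$ on $G_0^+$, up to an $O(\e|\log\e|)$-neighbourhood of $\Ga_0$. For $t\in[C\e|\log\e|,T]$ I would construct sub- and super-solutions of \eqref{Pe} of the form $\U\big((d(t,v)\mp\e\rho_\e(t))/\e\big)\pm q_\e(t)$, where $\U$ is the traveling-wave profile, $\U''+c_*\U'+f(\U)=0$ with $\U(\mp\infty)=\a_\pm$ --- the sign $c_*>0$ being exactly the content of {\rm (UB)}, cf.\ \eqref{UB} --- and $d(t,\cdot)$ is the signed distance to $\Ga_t$ (positive on the $\a_-$ side), with $\Ga_t$ moving at normal speed $c_*$. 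Since $\Ga_0$, and hence $\Ga_t$ for $t\in[0,T]$ by \cite{Fo}, is of class $C^4$, the curvature of $\Ga_t$ is bounded and enters only the lower-order corrections $q_\e,\rho_\e$; in particular no mean-curvature term appears at this time scale. The comparison principle (Lemma~\ref{l:CP:Pe}) then yields (a), and, by comparison with spatially constant sub/super-solutions, also $\theta\le\ue\le1-\theta$ on $[0,T]\times\T^d$ for some $\theta\in(0,1/2)$ independent of $\e$.

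The heart of the matter is (b). With $H_N(t)=H(\mu_t^N|\nu_t^N)$ one has $H_N(0)=H(\mu^N|\nu_0^N)=O(N^{d-\epsilon})$ by {\rm (BIP)} and \eqref{nu0N}, and an application of Yau's relative entropy inequality bounds $\tfrac{d}{dt}H_N(t)$ by a nonpositive Dirichlet-form term, carrying the large prefactor $N^2/\sqrt K$, plus $\int V_t^N\,d\mu_t^N$ for an explicit local function $V_t^N$ built from \eqref{LN} and $\partial_t\log\nu_t^N$. The two leading contributions to $V_t^N$ are the Kawasaki piece, of order $N^2/\sqrt K=N^2\e$, which after a discrete Taylor expansion and the standard one- and two-block replacements (absorbed by the Dirichlet form) reproduces $\e\,\De\ue(t,x/N)$, and the Glauber piece, of order $\sqrt K=\e^{-1}$, whose $\nu_t^N$-mean is $\e^{-1}f(\ue(t,x/N))$ by the very definition of $f$. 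Because $\ue$ solves \eqref{Pe} these match $\partial_t\ue$ to leading order, and what survives is a remainder carrying negative powers of $\e$; the delicate term is the second-order Taylor remainder of $\e^{-1}f$ about $\ue$. In the bulk $\{\ue\approx\a_\pm\}$ it has a favourable sign, thanks to $f'(\a_\pm)<0$ in {\rm (BS)}, giving a term $\le-c\e^{-1}H_N(t)$; near $\Ga_t$, where $\na\ue=O(\e^{-1})$, it is bounded crudely using that the layer has macroscopic width $O(\e|\log\e|)$. Collecting terms one expects a differential inequality $\tfrac{d}{dt}H_N(t)\le C\e^{-4}H_N(t)+R_N(t)$ with $R_N(t)=o(N^d)$ uniformly on $[0,T]$, so $H_N(t)\le e^{C\e^{-4}T}(H_N(0)+o(N^d))$; since $\e^{-4}=K^2\le\delta^2\log N$, the prefactor $e^{C\e^{-4}T}$ is at most $N^{C\delta^2T}$, and choosing $\delta$ small (depending on $\epsilon$ and $T$) keeps $H_N(t)=o(N^d)$. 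I expect this control of the $O(\e^{-1})$-reaction remainder along the moving front --- trivial in the balanced case, but here tied to the steep interface and the faster time scale --- to be the main obstacle and the genuinely new part of the argument; the remainder of (b) follows the template of \cite{FT,FvMST,EFHPS,EFHPS-2}.

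Finally, (c). Fix $t\in(0,T]$, $\phi\in C^\infty(\T^d)$ and $\gamma>0$; for $N$ large $t>C\e|\log\e|$, so the layer estimate of (a) applies. Under $\nu_t^N$ the quantity $\lan\a^N(t),\phi\ran=N^{-d}\sum_x\eta_x\phi(x/N)$ is an average of independent bounded variables with mean $N^{-d}\sum_x\ue(t,x/N)\phi(x/N)$, so Hoeffding's inequality gives a deviation probability (by more than $\gamma/2$) bounded by $e^{-cN^d}$ with $c=c(\gamma,\phi)>0$; combined with the entropy inequality and $H_N(t)=o(N^d)$, this transfers the concentration to $\mu_t^N$, i.e.\ $\lan\a^N(t),\phi\ran-N^{-d}\sum_x\ue(t,x/N)\phi(x/N)\to0$ in $\P^{\mu^N}$-probability. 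Since $\e^{-1}=\sqrt K=o(N)$, the Riemann-sum error gives $N^{-d}\sum_x\ue(t,x/N)\phi(x/N)\to\int_{\T^d}\ue(t,v)\phi(v)\,dv$, and $\int_{\T^d}\ue(t,v)\phi(v)\,dv\to\lan\chi_{\Ga_t},\phi\ran$ as $\e\to0$ by (a). Chaining the three limits yields $\lan\a^N(t),\phi\ran\to\lan\chi_{\Ga_t},\phi\ran$ in $\P^{\mu^N}$-probability, which is the assertion of Theorem~\ref{t:main}.
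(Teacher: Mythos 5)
Your overall architecture (traveling-wave sub/super-solutions for the sharp-interface behaviour of \eqref{Pe}, a relative-entropy estimate, and the entropy inequality plus concentration under the product reference measure) is the same as the paper's, and your parts (a) and (c) track the paper's Lemma \ref{l:prop:ue} and the reduction borrowed from \cite[Section 2.3]{EFHPS}. The genuine gap is in your part (b), which you yourself call the heart of the matter. You take the reference measure to be $\nu_{u^\e(t,\cdot)}$ built from the \emph{continuous} solution of \eqref{Pe} and propose to prove $H(\mu_t^N|\nu_t^N)=o(N^d)$ from scratch, but the two steps you offer for the dangerous reaction term do not close. Near the front you bound the $O(\e^{-1})$ Taylor remainder ``crudely using that the layer has width $O(\e|\log\e|)$'': the number of lattice sites in that layer is of order $N^d\e|\log\e|$, each carrying the factor $\e^{-1}=\sqrt K$ and an $O(1)$ fluctuation, so the crude bound gives at best $O(N^d)$, and with your stated width $O(N^d\log K)$ --- in neither case $o(N^d)$. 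Beating this requires exactly the quantified one-block/Feynman--Kac (log-Sobolev) machinery from which the condition $K\le\delta\sqrt{\log N}$ actually originates, and which you do not supply; the asserted mechanism that $f'(\a_\pm)<0$ produces a term $\le -c\,\e^{-1}H_N$ is not how the entropy production is controlled in these models. Moreover your Gronwall bookkeeping does not survive its own prefactor: from $\frac{d}{dt}H_N\le C\e^{-4}H_N+R_N$ with only $R_N=o(N^d)$ you get $e^{CK^2T}\,o(N^d)\approx N^{C\delta^2T}\,o(N^d)$, which is not $o(N^d)$ however small $\delta$ is; you would need a power saving $R_N=O(N^{d-\epsilon'})$ uniformly in $t$, which again only comes out of the quantitative block estimates.

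This gap cannot be repaired by citation, because the available entropy estimates (\cite[Theorem 1.3]{FvMST}, and those of \cite{FT,EFHPS}) are relative to the product measure built from the solution $u^N$ of the \emph{discretized} equation \eqref{PNK}, not from $u^\e$. That is precisely why the paper decomposes differently: it keeps the reference $\nu_{u^N(t,\cdot)}$, obtains Theorem \ref{t:Ent:est} from \cite[Theorem 1.3]{FvMST} by the time change $\tilde t=t/\sqrt K$ after observing that the balanced condition is never used there and that linear diffusion yields the discrete gradient bounds \eqref{uN:bds} with $\sigma=1$, and then the genuinely new work is the propagation of the interface at the discrete level (Theorem \ref{t:prop:uN}): the projected sub/super-solutions remain sub/super-solutions of \eqref{PNK} because the strict bounds $\mathcal{L}^\e u^-\le-1$, $\mathcal{L}^\e u^+\ge1$ absorb the $O(K/N)$ central-difference error, and Theorem \ref{t:PDE} follows by the discrete comparison principle. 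To salvage your route you must either carry out the full relative-entropy argument with the continuum reference $u^\e$ (including the extra discretization errors governed by $\|u^\e\|_{C^4}\sim\e^{-4}$), or, more economically, switch the reference to $u^N$ and add the discrete propagation step --- which is where the paper's actual novelty lies, not in the entropy estimate.
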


\subsection{Structure of the proof of Theorem \ref{t:main}}
\label{s:intro:pf:overview}

The structure of the proof of Theorem \ref{t:main} is the same as in \cite{EFHPS}: we apply a two-level breakdown of Theorem \ref{t:main} into three theorems 
(Theorems \ref{t:Ent:est}, \ref{t:gen:PNK} and \ref{t:prop:uN})
which we will prove independently. 

In preparation for describing this breakdown, we introduce the usual central difference approximation of \eqref{Pe} given in terms of $u^N(t,\cdot) = \{u^N(t,x)\}_{x\in \T_N^d}$ by
\begin{align} \tag{$P_N^K$} \label{PNK} 
\left\{ \begin{aligned}
  \partial_t  u^N(t,x) 
  &= \frac1{\sqrt K} \De^N  u^N(t,x) + \sqrt K f( u^N(t,x))
  &&\text{for } 0 < t < \infty \text{ and } x \in \T_N^d \\
  u^N(0,x) 
  &= u_0^N (x)
  &&\text{for } x \in \T_N^d, 
\end{aligned} \right.
\end{align} 
where $u_0^N$ is defined in \eqref{u0N} in terms of $u_0$, and
\begin{align*} 
\De^N u(x) \equiv N^2 \sum_{i=1}^d \left(u(x+e_i) + u(x-e_i) - 2u(x)\right),
\end{align*}
for any $u(\cdot) = \{u(x)\}_{x\in\T_N^d}$ is the discrete Laplacian. Here, $\{e_i\}_{i=1}^d$ are the standard unit basis vectors of $\Z^d$. Similar to $\nu_0^N$ (recall \eqref{nu0N},\eqref{nuu}) we set $\nu_t^N \equiv\nu_{u^N(t,\cdot)}$ for each $t > 0$. We will compare $\nu_t^N$ to the distribution $\mu_t^N$ of $\eta^N(t)$.

In addition to $u^N$, we use its extension $\hat u^N$ defined on the continuous torus. To define $\hat u^N$, we associate to any $x = (x_i)_{i=1}^d \in \T_N^d$ the box 
\[
  B \Big( \frac{x}N,\frac1N \Big) 
  \equiv \prod_{i=1}^d \Big[  \frac{x_i}N - \frac1{2N}, 
\frac{x_i}N + \frac1{2N} \Big)
\subset \T^d
\]
with center $\frac{x}N$ and side length $\frac1N$ (note that the family of boxes $\{ B(\frac{x}N,\frac1N) \}_{x \in \T_N^d}$ are a tessellation of $\T^d$). Then, we define the step function 
\begin{equation}
\label{uNhat} 
 \hat u^N(t,v) \equiv \sum_{x\in \T_N^d} u^N(t,x) 1_{B(\frac{x}N,\frac1N)}(v), \quad v \in \T^d.
\end{equation}

With this preparation we return to the two-level breakdown of Theorem \ref{t:main}. On the first level we use that Theorem \ref{t:main} is a corollary (see \cite[Section 2.3]{EFHPS} for a proof) of the following two theorems:

\begin{thm}[Entropy estimate] \label{t:Ent:est}
Given the assumptions in Theorem \ref{t:main}
$$
H(\mu_t^N|\nu_t^N) = o(N^d) \quad \text{as $N\to\infty$, uniformly for all } t\in [0,T].
$$
\end{thm}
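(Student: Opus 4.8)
The plan is to establish Theorem \ref{t:Ent:est} via the relative entropy method of Jara--Menezes, as adapted to Glauber--Kawasaki dynamics in \cite{FvMST,FT}. Write $f_t^N = d\mu_t^N/d\nu_t^N$ and $H_N(t) = H(\mu_t^N|\nu_t^N)$. By the standard entropy inequality together with the Yau-type computation (differentiating $H_N(t)$ in $t$ and using that $\nu_t^N$ is a slowly varying reference measure built from the discrete PDE \eqref{PNK}), one obtains an integral inequality of the schematic form
\begin{equation*}
  H_N(t) \le H_N(0) + \int_0^t \Big( -\tfrac{N^2}{\sqrt K}\, \mathfrak D_K(f_s^N;\nu_s^N) + \big\langle \nu_s^N, \, f_s^N \, (L_N^* \mathbf 1 - \partial_s \log \nu_s^N \text{ terms}) \big\rangle \Big)\, ds + (\text{error}),
\end{equation*}
where $\mathfrak D_K$ is the Dirichlet form coming from the Kawasaki part. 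The point of introducing $\nu_t^N$ with $u^N$ solving \eqref{PNK} is that the leading-order error terms cancel: the discrete heat equation kills the first-order Kawasaki contribution, and the choice of reaction term $\sqrt K f(u^N)$ kills the first-order Glauber contribution. What remains is a sum of local terms that must be controlled by $C\, H_N(s)$ (to close via Gr\"onwall) plus a genuinely small correction that is $o(N^d)$.

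First I would set up the one-block / two-block replacement estimates needed to pass from microscopic local functions (e.g. $c_x(\eta)$, $\eta_x\eta_{x+e_i}$) to their averages over mesoscopic boxes, and then to functions of $u^N$; this is where the log-Sobolev / spectral gap of the Kawasaki dynamics on a box enters, giving a gain of the Dirichlet form against the combinatorial cost. Crucially, because here $K = K(N)\to\infty$, every such replacement costs a power of $K$, and the restriction $K \le \delta\sqrt{\log N}$ is exactly what is needed to absorb these costs into the entropy production term and into the $o(N^d)$ error. Concretely, I expect terms of size $O(\sqrt K \,\cdot\, (\text{spatial average of a local function})\,)$ for which one uses, after a Taylor expansion of $f$ and of $\log(\nu_{u^N(t,x)}/\nu_{u^N(t,y)})$ in $|u^N(t,x)-u^N(t,y)|$, the gradient bounds on $u^N$ coming from \eqref{PNK}; the second-order terms in this expansion are what generate the $C\,H_N(s)$ contribution via the elementary inequality relating $\langle \nu, f \, g\rangle$ to $H(\mu|\nu)$ and $\log\langle\nu, e^g\rangle$ (together with an entropy-production bound on $e^g$).

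The main obstacle, and the place where the present setting differs from \cite{EFHPS}, is the need for uniform-in-$t$ a priori control of the discrete profile $u^N$: one needs $u^N(t,x)$ to stay in a compact subinterval of $(0,1)$ (so that $\nu_t^N$ has uniformly bounded, non-degenerate densities and the Taylor expansions are legitimate), and one needs discrete gradient estimates $|u^N(t,x+e_i)-u^N(t,x)| \le C/N$ that are \emph{robust as $\sqrt K \to \infty$}, uniformly on $[0,T]$. Since on the $O(1)$ time scale the interface of $u^\e$ (and hence of $u^N$) has already sharpened, naive parabolic regularity would blow up with $\sqrt K$. The resolution is to import the fine PDE estimates on $u^\e$ from \eqref{Pe} — sub/super-solution barriers built from the traveling wave (see \eqref{U} and the discussion around \eqref{P0}) and the comparison principle (Lemma \ref{l:CP:Pe}) — transfer them to $u^N$ via a discrete-to-continuum comparison, and thereby show that the interface layer of $u^N$ has width $O(\e) = O(1/\sqrt K)$ with matching gradient bounds; integrating $\sqrt K \cdot |\nabla^N u^N|^2 \sim \sqrt K \cdot (\sqrt K/N)^2 \cdot (\text{layer volume})$ over space then produces a factor $K/N^2 \cdot N^{d-1} = K N^{d-3}$ (times lower-order pieces), which is $o(N^d)$ for any $K = o(N^3)$, comfortably within the hypothesis. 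Assembling: $H_N(0) = O(N^{d-\epsilon})$ by (BIP), the Dirichlet-form term is nonpositive and absorbs the replacement costs (here $K \le \delta\sqrt{\log N}$ is used sharply), the quadratic remainder yields $\int_0^t C H_N(s)\,ds$, and Gr\"onwall's inequality gives $H_N(t) = o(N^d)$ uniformly on $[0,T]$, as claimed.
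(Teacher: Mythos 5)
Your overall strategy --- the Jara--Menezes/Yau relative entropy method with the reference measure $\nu_t^N$ built from the discrete PDE \eqref{PNK}, replacement estimates paid for by the Kawasaki Dirichlet form, and a Gr\"onwall closure --- is indeed the machinery behind this theorem. But it is not what the paper itself does at this point: the paper's proof is a reduction. Rescaling time by $\tilde t = t/\sqrt K$ turns the generator $L_N$ into $\tilde L_N = N^2 L_K + K L_G$ and identifies $\mu_t^N,\nu_t^N$ with $\tilde\mu^N_{\tilde t},\tilde\nu^N_{\tilde t}$, which are exactly the objects of \cite[Theorem~1.3]{FvMST}; Theorem \ref{t:Ent:est} then follows by citation, after checking three things: (i) the balanced condition on $f$ assumed there is never used in its proof; (ii) the estimate there is stated pointwise in $\tilde t$ but its Gr\"onwall structure makes it uniform; (iii) the exponent restriction relaxes from $K\le\delta(\log N)^{\sigma/2}$, $\sigma<1$, to $K\le\delta\sqrt{\log N}$ because linear diffusion gives the gradient and Laplacian bounds \eqref{uN:bds} with $\sigma=1$ (via \cite[Proposition~4.3]{FT} and \cite{FS}). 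So you are in effect proposing to reprove the cited theorem rather than invoke it; that is legitimate in principle, but then the burden is on you to actually execute the replacement lemmas, the log-Sobolev input, and the large-deviation/consistency estimates, none of which are carried out in your sketch.

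There is also one concrete error and one misplaced emphasis. You assert that one needs, and can obtain, discrete gradient bounds $|u^N(t,x+e_i)-u^N(t,x)|\le C/N$ \emph{uniformly as $K\to\infty$}. This is false: the interface of $u^N$ sharpens to a layer of macroscopic width $O(1/\sqrt K)$, i.e.\ $O(N/\sqrt K)$ lattice sites, across which $u^N$ varies by $O(1)$, so the discrete gradient near the interface is at least of order $\sqrt K/N$; the bound actually available (and used) is $N|u^N(t,y)-u^N(t,x)|\le CK$, i.e.\ \eqref{uN:bds} with $\sigma=1$. Your later heuristic $|\nabla^N u^N|\sim\sqrt K/N$ in the layer contradicts your own stated requirement. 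Relatedly, the back-of-the-envelope computation yielding the condition $K=o(N^3)$ identifies the wrong bottleneck: the binding constraint in this argument is not the size of $\sqrt K\,|\nabla^N u^N|^2$ integrated over the layer, but the cost of the one-block/replacement and exponential-moment estimates, which is what forces the much more restrictive hypothesis $K\le\delta\sqrt{\log N}$ appearing in Theorem \ref{t:main}. As written, the proposal is a plausible roadmap to the proof in \cite{FvMST} rather than a proof, and it would need the gradient-bound claim corrected before any of the Taylor expansions of $\log(\nu_{u^N(t,x)}/\nu_{u^N(t,y)})$ you rely on could be justified.
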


\begin{thm}
\label{t:PDE}
Assume (BS), (UB) and  (BIP). Let $K = K(N) \to \infty$ as $N \to \infty$.
Then, there exists an exponent $\theta > 0$ and a propagation speed $c_* > 0$ such that if $K = o(N^\theta)$ as $N \to \infty$, then for the corresponding hypersurface $\Gamma_t$ given by \eqref{eq:Gt} we have that
$$\lim_{N\rightarrow\infty} \hat u^N(t,v) =\chi_{\Gamma_t}(v)$$
 for all $t\in (0,T]$ and all $v \in \T^d \setminus \Gamma_t$.
\end{thm}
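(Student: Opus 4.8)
\emph{Proof proposal.} The plan is to trap $u^N$ between two explicit discrete sub- and super-solutions of $(P_N^K)$ that both converge to $\chi_{\Ga_t}$. Working directly with $(P_N^K)$ --- rather than first comparing $u^N$ with the solution $\ue$ of $(P^\e)$ for $\e=1/\sqrt K$ and then invoking the known sharp interface limit $\ue\to\chi_{\Ga_t}$ of \cite{BSS,Ga} --- avoids the prohibitive Gronwall factor $e^{Ct/\e}$ that a discrete-to-continuous comparison would incur near the moving interface. We use three preliminaries. (i) Since $(P_N^K)$ is a finite \emph{cooperative} system of ODEs (its right-hand side depends on the off-diagonal components only through the nonnegative coefficients of $\De^N$, and $f$ is Lipschitz on $[0,1]$), it obeys a comparison principle, the discrete analogue of Lemma~\ref{l:CP:Pe}: if $\underline u^N(0,\cdot)\le u_0^N\le\bar u^N(0,\cdot)$ and $\partial_t\bar u^N\ge\tfrac1{\sqrt K}\De^N\bar u^N+\sqrt Kf(\bar u^N)$, $\partial_t\underline u^N\le\tfrac1{\sqrt K}\De^N\underline u^N+\sqrt Kf(\underline u^N)$ on $[0,T]$, then $\underline u^N\le u^N\le\bar u^N$ on $[0,T]$. (ii) Under (BS)--(UB) there are a unique speed $c_*>0$ and a smooth, strictly decreasing traveling wave profile $U:\R\to(\a_-,\a_+)$ with $U''+c_*U'+f(U)=0$, $U(-\infty)=\a_+$, $U(+\infty)=\a_-$, and exponential convergence of $U-\a_\pm$ together with all derivatives at $\mp\infty$; the sign of $c_*$ is that of the integral in \eqref{UB} (see \eqref{U}). (iii) By (BIP) and \cite{Fo} we fix $T>0$ small enough that $\Ga_t$ stays $C^4$ on $[0,T]$ and, in a fixed tubular neighborhood $\mathcal N$ of $\bigcup_{t\le T}\Ga_t$, the signed distance $d(t,v)$ to $\Ga_t$ (positive on the $G_t^-$-side) is $C^4$; the offset formula \eqref{eq:Gt} then yields the explicit identities $d(t,v)=d_0(v)-c_*t$, $|\na d|\equiv1$, $\De d\equiv\De d_0$ bounded, with $d_0=d(0,\cdot)$. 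Outside $\mathcal N$ we replace $d$ by a bounded $C^4$ extension that stays away from $0$ with the correct sign.

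The barriers are, with $\e=1/\sqrt K$, constants $A_1,q_0>0$ and a shift $A_0=A_0(\e)>0$ to be fixed,
\[
  u^{N,\pm}(t,x)\equiv U\!\Big(\frac{d(t,x/N)}{\e}\mp A(t)\Big)\pm\e\,q(t),
  \qquad A(t)=A_0+A_1t,\quad q(t)=q_0e^{t},
\]
so the front of $u^{N,\pm}$ is $\Ga_t$ displaced by $\pm\e A(t)=o(1)$ and moving at speed $c_*\pm\e A_1$. Inserting $u^{N,\pm}$ into $(P_N^K)$ and Taylor-expanding $\De^N$ against the smooth function $v\mapsto U(d(t,v)/\e)$, the $\e^{-1}$-order contribution $-\e^{-1}\big(c_*U'+U''+f(U)\big)$ vanishes identically by the traveling wave equation (here we use $\partial_t d\equiv-c_*$ and $|\na d|\equiv1$), and what remains is
\[
  \mathcal L[u^{N,\pm}]=\mp\,U'\cdot\big(A_1+\De d_0\big)\;-\;q\,f'(U)\;+\;O(\e)\;+\;O\!\big(N^{-2}K^{3/2}\big),
\]
where $\mathcal L[w]\equiv\partial_t w-\tfrac1{\sqrt K}\De^N w-\sqrt Kf(w)$, $U,U'$ are evaluated at $\tfrac{d(t,x/N)}\e\mp A(t)$, and the last term --- coming from $\De^N-\De$ applied to a function varying on the $\e$-scale, of size $O(N^{-2}\e^{-4})\cdot\e$ --- is moreover exponentially small in $|d|/\e$. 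As $U'<0$, the first term equals $\pm|U'|(A_1+\De d_0)\ge0$ for $u^{N,\pm}$ once $A_1>\|\De d_0\|_\infty$; on the bounded core $\{|d/\e\mp A|\le\xi_0\}$ of the front it is bounded below by $c_0(A_1-\|\De d_0\|_\infty)$ with $c_0\equiv\min_{|\xi|\le\xi_0}|U'|>0$, while off that core $U$ is so close to $\a_\pm$ that $-q\,f'(U)\ge\tfrac12\min(|f'(\a_-)|,|f'(\a_+)|)\,q_0>0$ (recall $f'(\a_\pm)<0$); on the core, $-q\,f'(U)$ is bounded, hence harmless once $A_1$ is large. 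Fixing $\xi_0$ and then $A_1$ large enough (depending only on $\|\De d_0\|_\infty$, $\|f'\|_{C([0,1])}$, $q_0$, $c_0$, $T$) and taking $N$ large, we obtain $\mathcal L[u^{N,+}]\ge0\ge\mathcal L[u^{N,-}]$ on $[t_\e,T]$ as soon as $N^{-2}K^{3/2}\to0$; thus the exponent in the statement can be taken $\theta=\tfrac43$ (any smaller value works), and this is the only place $K$ is constrained in terms of $N$.

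It remains to start the comparison and pass to the limit. Since at $t=0$ the profile $u_0^N\in[u_-,u_+]$ is not yet close to $\chi_{\Ga_0}$, we prepend a short initial layer on $[0,t_\e]$ with $t_\e=O(\e|\log\e|)$, using the standard generation-of-interface barriers built from the bistable ODE $\dot w=\e^{-1}f(w)$ (the diffusion is of lower order on this time scale, and the argument, being insensitive to the sign of the integral in (UB), can be imported from \cite{EFHPS}, cf.\ also \cite{BSS,Ga}); this makes $u^N(t_\e,\cdot)$ lie within $O(\e)$ of $\a_\pm$ away from an $O(\e|\log\e|)$-neighborhood of $\Ga_0$, which lets us choose $A_0$ of order $|\log\e|$ and $q_0$ of order $1$ so that $u^{N,-}(t_\e,\cdot)\le u^N(t_\e,\cdot)\le u^{N,+}(t_\e,\cdot)$, after which the comparison principle extends the ordering to $[t_\e,T]$. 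Finally, for fixed $t\in(0,T]$ and $v\in\T^d\setminus\Ga_t$ we have $d(t,v)=d_0(v)-c_*t\ne0$; for $N$ large the cell of the tessellation containing $v$ has center $x_N/N\to v$, so $\tfrac{d(t,x_N/N)}\e\mp A(t)\to\pm\infty\cdot\operatorname{sgn}d(t,v)$ and $\e q(t)\to0$, whence $u^{N,\pm}(t,x_N)\to\chi_{\Ga_t}(v)$; since $\hat u^N(t,v)=u^N(t,x_N)$ is trapped between $u^{N,-}(t,x_N)$ and $u^{N,+}(t,x_N)$, it converges to $\chi_{\Ga_t}(v)$, which is Theorem~\ref{t:PDE}.

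The genuinely new ingredient --- relative to \cite{EFHPS,EFHPS-2,FvMST,FT}, where the interface moves by mean curvature on a longer time scale --- is the propagation step, because here the interface moves ballistically at the $O(1)$ speed $c_*$. The barrier ansatz and its cancellation structure must therefore be built from scratch at the discrete level: leading-order cancellation comes from the traveling wave equation precisely because $\partial_t d\equiv-c_*$, the $O(1)$ ``geometric'' term $U'\De d_0$ is absorbed by an $O(\e)$ displacement of the front (which is why we may keep $A_1,q_0$ uniform and still let $\e A\to0$), and all of this has to coexist with control of the discretization error $O(N^{-2}K^{3/2})$ of $\De^N$ applied to a function varying on the $\e=K^{-1/2}$ scale --- reconciling these three demands is exactly what the hypothesis $K=o(N^\theta)$ encodes. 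I expect this simultaneous balancing, together with keeping every constant ($\xi_0$, $A_1$, $q_0$, the decay rate of $U$, the radius of $\mathcal N$, $\|D^4d_0\|_{C(\mathcal N)}$) uniform on the fixed horizon $[0,T]$ --- which is why $T$ is chosen small enough via \cite{Fo} --- to be the main technical obstacle.
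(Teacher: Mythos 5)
Your overall strategy is the same as the paper's (discrete comparison principle, traveling-wave barriers built from a shifted signed distance, an imported generation-of-interface step, and control of the $\De^N$-vs-$\De$ discretization error), but there is a genuine gap at the point where the barriers must be initialized, and it is precisely the point where the unbalanced case forces the paper's more elaborate construction. Your vertical correction is only $\pm\e\,q(t)$ with $q(t)=q_0e^{t}$, so at the matching time your supersolution exceeds $U(\cdot)$ by only $O(\e)$; away from $\Ga_0$ on the $G_0^-$-side it is $\approx \a_-+O(\e)$. But the generation results you propose to import (Theorems \ref{t:gen:Pe} and \ref{t:gen:PNK}, i.e.\ the statements available from \cite{EFHPS,EFHPS-2}) only give $u^\e(t^\e,\cdot)\le \a_-+\de$ (resp.\ $u^N(t^N,\cdot)\le\a_-+\de$) there for a \emph{fixed} small $\de\in(0,\de_0)$, not within $O(\e)$ of $\a_\pm$ as you assert. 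Hence the initial ordering $u^N(t^N,\cdot)\le u^{N,+}$ fails for small $\e$, and the comparison principle never gets started. Repairing this by taking a vertical shift of fixed size $O(\de)$ is not harmless: near the front $f'(U)>0$, so the Taylor term $\e^{-1}q\,f'(U)$ becomes of order $\e^{-1}$ and can no longer be dominated by the $O(1)$ term $-U'(A_1+\De d_0)$; one needs the horizontal drift to be of order $q/(\si\e)$ while $q$ is large, together with an inequality of the type $U'\le-\si(\b+f'(U))$ (Lemma \ref{l:Up:LB}), and an exponentially decaying $q$ so that $\int_0^t q/\e$ stays $O(1)$. This is exactly the paper's choice of $p,q$ in \eqref{upm}; your simpler ansatz ($A(t)=A_0+A_1t$, vertical shift $O(\e)$) cannot be made to work with the cited generation input.

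Two secondary points. First, your claim that $\theta=\tfrac43$ works and that the discretization error is ``the only place $K$ is constrained'' is not consistent with your own argument: you also invoke the discrete generation step, and the available discrete generation theorem (Theorem \ref{t:gen:PNK}, from \cite[Theorem 6.1]{EFHPS}) itself requires $K=o(N^\theta)$ with $\theta=\tfrac{2\ga}{3\ga+\bar\ga}<\tfrac23$ as in \eqref{phi}; that, not the propagation step, is the binding constraint (the propagation step alone needs only $K=o(N)$ when one uses the $C^3$ error bound, as in the paper). Second, your leading-order cancellation ``vanishes identically'' only where $|\na d|=1$ and $\partial_t d=-c_*$, i.e.\ inside the tubular neighborhood; outside it you must argue, as the paper does, that all $\e^{-1}$-terms are exponentially small because $|d|\ge d_0$ there and $U',\,U'',\,f(U)$ decay exponentially — you gesture at this but do not carry it out. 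Neither of these is fatal, but the initialization issue in the first paragraph is a real gap in the proposed proof.
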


We remark that the constants $\theta, c_*$ are determined by $f$; see \eqref{phi} for an explicit expression for $\theta$ and see \eqref{U} and the description below it for the definition of $c_*$.

On the second level, we apply a further breakdown of Theorem \ref{t:PDE}. The reason for this is that, similar to the solution $u^\e$ of \eqref{Pe} (recall the discussion at the start of Section \ref{s:intro:P0}), the evolution of $u^N(t,\cdot)$ can be split into two separate phases. In the first phase, the generation phase, $u^N(t,\cdot)$ evolves towards a discrete version of a diffusive interface on a small time interval of order $O(1/\sqrt K)$; see Theorem \ref{t:gen:PNK} below for a sufficient estimate. In the second phase, the propagation phase, the diffusive interface described by $u^N(t,\cdot)$ evolves similar to $\Gamma_t$; see Theorem \ref{t:prop:uN} below. We will show that Theorem \ref{t:PDE} is a corollary of Theorem \ref{t:prop:uN}.

To summarize the above, the main three ingredients of the proof of Theorem \ref{t:main} are the entropy estimate (Theorem \ref{t:Ent:est}), the generation of the interface in $u^N(t,\cdot)$ (Theorem \ref{t:gen:PNK}) and the propagation of the interface in $u^N(t,\cdot)$ (Theorem \ref{t:prop:uN}). Theorems \ref{t:Ent:est} and \ref{t:gen:PNK} follow from two counterpart theorems from \cite{EFHPS-2,FvMST} after applying a proper time rescaling; we demonstrate this in Section \ref{s:time:rescale}. Theorem \ref{t:prop:uN}, however, does not follow from the previous studies, which can be seen from the limiting equation in \eqref{P0}, which is different from the mean curvature flow of the previous studies. Therefore, we give a self-contained proof of Theorem \ref{t:prop:uN}; see Section \ref{s:pf:PDE}.

\subsection{Discussion}
\label{s:intro:disc}

We consider Theorem \ref{t:PDE} on the convergence of \eqref{PNK} to \eqref{P0} as interesting in its own right. For this reason we state Theorem \ref{t:PDE} for a larger range of values for $K$ than in our main Theorem \ref{t:main}. 

The convergence of \eqref{PNK} to \eqref{P0} as stated in Theorem \ref{t:PDE} is not a simple consequence of the convergence of \eqref{Pe} to \eqref{P0}, which is shown by \cite{BSS,Ga}. When working with the discrete counterpart \eqref{PNK} of \eqref{Pe}, more precise estimates on the solutions $u^N$ and $u^\e$ are needed. We refer to Section \ref{s:pf:PDE} for details.
\smallskip

Next we comment on the necessity of assumption (BIP) in Theorem \ref{t:main}. It is the intersection of the assumptions needed for the entropy estimate in Theorem \ref{t:Ent:est}, for the generation of the interface and for the propagation of the interface. More precisely, Theorem \ref{t:Ent:est} requires $u^N$ to have bounded discrete gradients up to 4th order. The generation of the interface \cite[Theorem 2.3]{EFHPS} requires $u^N(x) = u_0(x/N)$, and the propagation of the interface requires $u^0$ to be of class $C^3$.
\smallskip

In the next four comments we continue the discussion at the end of Section \ref{s:intro:GK} on the differences between the present study and \cite{EFHPS,EFHPS-2,FvMST,FT}. First, we argue that the hydrodynamic limit obtained in Theorem \ref{t:main} is consistent with the hydrodynamic limit obtained in \cite{EFHPS,EFHPS-2,FvMST,FT}, even though the limiting equation is different (in our paper this is the Huygens' principle \eqref{P0} whereas in the previous papers it is the mean curvature flow). The key difference in the setting is that our $f$ is unbalanced (see \eqref{UB}) whereas the $f$ used in the previous papers is balanced. In fact, Theorem \ref{t:main} also holds when $f$ is balanced; then, $c_* = 0$, i.e.\ the interface $\Gamma_t$ is stationary on this time scale. To observe a moving interface, one has to speed up the time variable from $t$ to $\sqrt K t$; see \cite{EFHPS}. Then, $\Gamma_t$ turns out to satisfy a mean curvature flow.

Second, the hydrodynamic limit in Theorem \ref{t:main} is also interesting for $d=1$. Indeed, for $d=1$, $\Gamma_t$ is a collection of points, and \eqref{P0} dictates that these points move with constant speed. In the previous studies where the limiting dynamics for $\Gamma_t$ is the motion by mean curvature, $\Gamma_t$ is stationary when $d=1$. This is consistent with mean curvature flow in the sense that points in one dimension have zero mean curvature.

Third, we comment on the choice of the Kawasaki generator $L_K$ in \eqref{LK}. We have chosen a simple version of $L_K$ such that the diffusion in \eqref{Pe} (and the corresponding discretization in \eqref{PNK}) are linear, as opposed to the nonlinear problems which appear in \cite{EFHPS,EFHPS-2,FvMST}. The reason for this is that in the nonlinear case the ODE in \eqref{U} below (in the proof for the propagation of the interface) gets additional nonlinear terms.  
 We leave this to future research.
 Nevertheless, Theorem \ref{t:main} also holds for the more general Kawasaki generator considered in \cite{FvMST} as long as the corresponding problem \eqref{Pe} has linear diffusion. Indeed, this is easy to see from the breakdown of the proof of Theorem \ref{t:main} into Theorems \ref{t:Ent:est} and \ref{t:PDE}, and that we use \cite{FvMST} for the proof of Theorem \ref{t:Ent:est}. 
 
Fourth, as an alternative to the Kawasaki generator $L_K$, one can consider the generator of a zero-range process (as done in \cite{EFHPS,EFHPS-2}) as long as the resulting diffusion term in \eqref{Pe} is linear. Then, a similar hydrodynamic limit as in Theorem \ref{t:main} holds. Indeed, also for such a zero-range process the problems \eqref{Pe} and \eqref{PNK} are similar. Then, the breakdown of the proof mentioned in Section \ref{s:intro:pf:overview} shows that it is left to establish an entropy estimate as in Theorem \ref{t:Ent:est}. This entropy estimate can be obtained from that in \cite[Theorem 2.2]{EFHPS} in a similar manner as our proof in Section \ref{s:time:rescale} of Theorem \ref{t:Ent:est}, which relies on the entropy estimate \cite[Theorem 1.3]{FvMST}.
\smallskip

Finally, we mention another recent result on a sharp interface limit of Glauber-Kawasaki dynamics. In \cite{DFPV} a two-species
Glauber-Kawasaki dynamics is studied. There, the Glauber part is only active at sites where two particles of different species meet. At such sites, it describes a high death rate for both particles. The resulting hydrodynamic limit describes the segregation of the two different species by means of a
Stefan free boundary problem.
\smallskip

The remainder of the paper is organized as follows. In Section \ref{s:time:rescale} we show the connection between $L_N$, \eqref{Pe} and \eqref{PNK} and their counterparts in \cite{EFHPS,EFHPS-2,FvMST,FT} through a time rescaling, and use it to prove Theorems \ref{t:Ent:est} and \ref{t:gen:PNK}. In Section \ref{s:pf:PDE} we prove Theorem \ref{t:main}, which essentially consists of the proof of Theorem \ref{t:PDE}.

\section{Entropy estimate and generation of interface by time rescaling}
\label{s:time:rescale}

We show that, by the time rescaling
\begin{equation*}
  \tilde t \equiv \e t = \frac1{\sqrt K} t,
\end{equation*}
Theorems \ref{t:Ent:est} and \ref{t:gen:PNK} follow essentially from \cite[Theorem 1.3]{FvMST} and \cite[Theorem 6.1]{EFHPS} respectively. For later use, we will also cite \cite[Theorem 1.1]{EFHPS-2}, which is the version of \cite[Theorem 6.1]{EFHPS} with $u^N$ replaced by $u^\e$. Care is needed when citing these theorems, because they are established for a nonlinear term $\tilde f$ which is balanced, i.e.
\[
  \int_{\a_-}^{\a_+} \tilde f(u) du = 0.
\]
However, the proofs of these theorems do not rely on this property, and extend without modification to the unbalanced $f$ considered in this paper.

Next, we introduce the time-rescaled versions of the process $\eta^N(\cdot)$ and of the problems \eqref{Pe} and \eqref{PNK}. Let 
\begin{equation*}
  \tilde \eta^N (\tilde t) \equiv \eta^N(\sqrt K \, \tilde t) = \eta^N(t)
\end{equation*}
be the speeded-up process with initial distribution given by 
$\tilde \mu^N=\mu^N$. Note that the generator of $\tilde \eta^N(\cdot)$ is given by
\begin{equation*} 
  \tilde L_N = \sqrt K L_N = N^2 L_K + K L_G,  
\end{equation*}
which is (a simplification of) the generator considered in \cite{FvMST}.
Let $\tilde \mu^N_{\tilde t}$ be the distribution of $\tilde \eta^N (\tilde t)$, and note that 
\[
  \tilde \mu^N_{\tilde t} = \mu^N_t.
\]
The time-rescaled versions of \eqref{Pe} and \eqref{PNK} are given by
\begin{align} \tag{$\tilde P^\e$} 
	 \left\{ \begin{aligned}
	\partial_{\tilde t} \tilde u^\e
	&= \Delta \tilde u^\e
	+ \displaystyle{ \frac{1}{\varepsilon^2}} f( \tilde u^\e)
	&&\mbox{ in } (0,\infty)\times \T^d \\
    \tilde u^\e(0,v) 
    &= u_0(v)
	&&\text{ for } v \in \T^d
	\end{aligned} \right.
\end{align}
and
\begin{align} \tag{$\tilde P_N^K$} 
 \left\{ \begin{aligned}
  \partial_{\tilde t} \tilde u^N
  &= \De^N \tilde u^N + K f(\tilde u^N)
  &&\text{in } (0,\infty) \times \T_N^d \\
  \tilde u^N(0,x) 
  &= u_0^N (x)
  &&\text{for } x \in \T_N^d,
\end{aligned} \right.
\end{align} 
respectively. The connection with the solutions $u^\e$ and $u^N$ to \eqref{Pe} and \eqref{PNK} is easily verified to be
\[
  \tilde u^\e(\tilde t,v) = u^\e(t,v),
  \qquad \tilde u^N(\tilde t, v) = u^N(t,v).
\]
As before, we set $\tilde\nu_{\tilde t}^N \equiv \nu_{\tilde u^N( \tilde t,\cdot)}$. Note that
\[
  \tilde\nu_{\tilde t}^N 
  = \nu_{\tilde u^N( \tilde t,\cdot)}
  = \nu_{u^N( t,\cdot)}
  = \nu_t^N.
\]
Hence, the properties on $\tilde\mu_{\tilde t}^N$, $\tilde\nu_{\tilde t}^N $, $\tilde u^\e$ and $\tilde u^N$ stated in \cite[Theorem 1.3]{FvMST}, \cite[Theorem 1.1]{EFHPS-2} and \cite[Theorem 6.1]{EFHPS} translate directly to properties on $\mu_t^N$, $\nu_t^N$, $u^\e$ and $u^N$. For the reader's convenience we translate these theorems to the original time $t$.

By rescaling time from $\tilde t$ to $t$, \cite[Theorem 1.3]{FvMST} captures Theorem \ref{t:Ent:est} when taking the following two notes into account:
\begin{enumerate}
  \item \cite[Theorem 1.3]{FvMST} only states the entropy estimate pointwise in $\tilde t$ whereas we require a uniform estimate in $\tilde t$ due to the $N$-dependent rescaling of time. However, the proof of \cite[Theorem 1.3]{FvMST} demonstrates that the entropy estimate is uniform in $\tilde t$ because the estimate is constructed by Gronwall's lemma.
  \item \cite[Theorem 1.3]{FvMST} requires the stronger upper bound on $K$ given by $K \leq \delta (\log N)^{\sigma / 2}$ for some implicit constant $\sigma \in (0,1)$ whereas we require $\sigma = 1$. The reason for this stronger upper bound is that \cite{FvMST} considers nonlinear diffusion of $u^N$ in \eqref{PNK}. In the case of nonlinear diffusion, it follows from \cite[Corollaries 4.4 and 5.10]{FS} 
  that there exists a constant $C > 0$ such that
\begin{equation} \label{uN:bds}
  N|u^N(t,y) - u^N(t,x)| \leq C K^{1/\sigma} 
  \quad \text{and} \quad 
  | \Delta^N u^N(t,x) | \leq C K^{2/\sigma} 
\end{equation}  
for all $t \in [0,T]$ and all $x,y \in \T_N^d$ with $|x-y|=1$. If these bounds would hold for $\sigma = 1$, then the proof of \cite[Theorem 1.3]{FvMST} shows that the weaker requirement $K \leq \delta \sqrt{ \log N }$ (as in the present paper) is sufficient. Since we consider linear diffusion, the bounds in \eqref{uN:bds} indeed hold with $\sigma = 1$; see \cite[Proposition 4.3]{FT} for the first of the two bounds, and then \cite[Proof of Proposition 6.3]{FS} for the second bound. 
\end{enumerate}

We continue with a simplified version of \cite[Theorem 1.1]{EFHPS-2} (see also \cite[Theorem 5.1]{EFHPS}) on the generation of the interface. This theorem formalizes that at a certain small time $t^\e$, $u^\e(t^\e, v)$ is close to either $\alpha_-$ or $\alpha_+$, except when $v$ is close to $\Gamma_0$. In preparation, we define
\begin{align}\label{cond_mu_eta0}
	\gamma \equiv f'(\alpha_*)
	, \quad
	t^\varepsilon \equiv \gamma^{-1} \varepsilon |\log \varepsilon|
	, \quad
	\delta_0 \equiv \min(\alpha_* - \alpha_-, \alpha_+ - \alpha_*).
\end{align}
The time point $t^\e$ is the analogue of $\tilde t^\varepsilon = \gamma^{-1}\varepsilon^2|\log \varepsilon| = \e t^\e$ in \cite{EFHPS,EFHPS-2} so that $u^\e(t^\e,v) = \tilde u^\e(\tilde t^\e,v)$ holds.
 
\begin{thm}\label{t:gen:Pe}
Let $f$ satisfy (BS). Let $u^\varepsilon$ be the solution to \eqref{Pe}. Then, for all $\delta \in (0, \delta_0)$ there exist positive constants $\varepsilon_0$ and $M_0$ such that, for all $\varepsilon \in (0, \varepsilon_0)$, we have the following: 
\begin{enumerate}
\item For all $v \in \T^d$,
\begin{align*} 
	\alpha_- - \delta
	\leq
	u^\varepsilon(t^\varepsilon,v)
	\leq
	\alpha_+ + \delta.
\end{align*}

\item If $u_0(v) \geq \alpha_* +  M_0 \varepsilon$, then
\begin{align*} 
	u^\varepsilon(t^\varepsilon,v) \geq \alpha_+ - \delta.
\end{align*}

\item If $u_0(v) \leq \alpha_*  - M_0 \varepsilon$, then
\begin{align*} 
	u^\varepsilon(t^\varepsilon,v) \leq \alpha_- + \delta.
\end{align*}

\end{enumerate}

\end{thm}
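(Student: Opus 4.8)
I would prove Theorem~\ref{t:gen:Pe} by the classical generation-of-interface scheme: construct ordered sub- and super-solutions of \eqref{Pe} out of the reaction ODE and invoke the comparison principle (Lemma~\ref{l:CP:Pe}). (This also exhibits the statement, after the rescaling $\tilde t=\e t$, as \cite[Theorem~1.1]{EFHPS-2} --- see also \cite[Theorem~5.1]{EFHPS} --- whose proof uses only (BS); so one could alternatively just cite that.) Throughout I use that $u_0\in C^2(\T^d)$ is bounded away from $0$ and $1$, as holds under (BIP). Write $Y(\t,\xi)$ for the solution of $Y_\t=f(Y)$, $Y(0,\xi)=\xi$. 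By (BS), $f<0$ on $(\a_-,\a_*)$, $f>0$ on $(\a_*,\a_+)$, $f>0$ on $(0,\a_-)$ and $f<0$ on $(\a_+,1)$; hence $[\a_-,\a_+]$ is forward invariant, $Y(\t,\xi)\to\a_+$ for $\xi\in(\a_*,1)$ and $Y(\t,\xi)\to\a_-$ for $\xi\in(0,\a_*)$ as $\t\to\infty$ (uniformly on compact subsets of $(\a_*,1)$, resp.\ $(0,\a_*)$), and $\a_*$ is repelling with linearization rate $\ga=f'(\a_*)>0$.

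The quantitative input I would isolate is
\begin{equation*}
 0<Y_\xi(\t,\xi)\le C_0\,e^{\ga\t},\qquad \Big|\frac{Y_{\xi\xi}}{Y_\xi}(\t,\xi)\Big|\le C_0\big(e^{\ga\t}-1\big)\qquad(\t\ge0),
\end{equation*}
uniformly for $\xi$ in a fixed neighbourhood of $[\a_-,\a_+]$, with $C_0=C_0(f)$. Indeed $Y_\xi=\exp(\int_0^\t f'(Y(s,\xi))\,ds)$, and the exponent satisfies $\int_0^\t f'(Y(s,\xi))\,ds\le\ga\t+C$ uniformly: when $\xi$ stays away from $\a_*$ it is bounded, and for $\xi\to\a_*^+$ (say), substituting $y=Y(s,\xi)$ and using that $Y$ first reaches the maximiser $\b_+\in(\a_*,\a_+)$ of $f$, one gets $\int_0^\t f'(Y)\,ds\le\ga\t'+\int_\xi^{\b_+}\frac{f'(y)-\ga}{f(y)}\,dy$, and the last integral stays bounded because near $\a_*$ one has $f(y)\sim\ga(y-\a_*)$ while $f'(y)-\ga=O(y-\a_*)$. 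The second bound then follows from $\frac{Y_{\xi\xi}}{Y_\xi}(\t)=\int_0^\t f''(Y(s))Y_\xi(s)\,ds$.

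Next, with a constant $C=C(f,\|u_0\|_{C^2})$ to be fixed, I would set
\begin{equation*}
 w^\pm(t,v)\equiv Y\big(t/\e,\ u_0(v)\pm\e^2C(e^{\ga t/\e}-1)\big).
\end{equation*}
Since the perturbation is spatially constant, $\De w^\pm=Y_{\xi\xi}|\na u_0|^2+Y_\xi\De u_0$, and, factoring out $Y_\xi>0$, the super-/sub-solution inequality for $w^+$/$w^-$ reduces to $p'(t)\ge\e\big(|\De u_0|+|Y_{\xi\xi}/Y_\xi|\,|\na u_0|^2\big)$ for $p(t)=\e^2C(e^{\ga t/\e}-1)$; by the bounds above this holds once $C$ is large enough (depending on $f$ and $\|u_0\|_{C^2}$). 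As $w^\pm(0,\cdot)=u_0$, Lemma~\ref{l:CP:Pe} gives $w^-(t,\cdot)\le\ue(t,\cdot)\le w^+(t,\cdot)$ on $[0,T]$. The role of the exponent $\ga$ is that $e^{\ga t^\e/\e}=\e^{-1}$, so the perturbation at the generation time is only $p(t^\e)=C\e(1-\e)=O(\e)$; hence, with $s^\e:=t^\e/\e=\ga^{-1}|\log\e|$,
\begin{equation*}
 Y\big(s^\e,\,u_0(v)-C\e+C\e^2\big)\ \le\ \ue(t^\e,v)\ \le\ Y\big(s^\e,\,u_0(v)+C\e-C\e^2\big).
\end{equation*}

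Finally, I would read off (1)--(3) from the ODE asymptotics. For (1), $u_0(v)\pm C\e$ either lies in $[\a_-,\a_+]$, whence $Y(s^\e,\cdot)\in[\a_-,\a_+]$, or it lies slightly outside but then bounded away from $\a_*$ and from $0,1$, whence by the uniform convergence $Y(s^\e,\cdot)\in[\a_--\de,\a_++\de]$ once $\e$ (hence $s^\e$) is small. For (2), if $u_0(v)\ge\a_*+M_0\e$ with $M_0>C$, then $\xi:=u_0(v)-C\e\ge\a_*+(M_0-C)\e$, and integrating $\dot y=f(y)$ (with $f(\a_*+\eta)\sim\ga\eta$) shows that $Y(\cdot,\xi)$ reaches a fixed level $\a_*+\rho$ by time $\ga^{-1}|\log\e|-\ga^{-1}\log\tfrac{M_0-C}{\rho}+O(1)$, i.e.\ with a margin $\ga^{-1}\log\tfrac{M_0-C}{\rho}-O(1)$ before $s^\e$; since $f$ is bounded below on $[\a_*+\rho,\a_+-\de]$, a bounded further time carries $Y$ past $\a_+-\de$, so taking $M_0$ large makes this margin exceed that time and yields $Y(s^\e,\xi)\ge\a_+-\de$ (the case $\xi\ge\a_+$ being immediate, since then $Y\ge\a_+$); case (3) is symmetric, using $w^+$ and $\xi=u_0(v)+C\e$. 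The step I expect to be the real obstacle is the sharp bound $Y_\xi\le C_0e^{\ga\t}$ with the \emph{exact} rate $\ga=f'(\a_*)$, uniformly as $\xi\to\a_*$: it is precisely this --- and not the crude $\sup f'$ --- that keeps the barrier perturbation $O(\e)$ at time $t^\e$ and thereby closes the scheme; its delicate part is the uniform boundedness of $\int_\xi^{\b_+}\frac{f'(y)-\ga}{f(y)}\,dy$ as $\xi\to\a_*$.
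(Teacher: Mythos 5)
Your proposal is correct, but it is worth noting that the paper does not reprove Theorem~\ref{t:gen:Pe} at all: as explained at the start of Section~\ref{s:time:rescale}, the theorem is obtained by citing \cite[Theorem 1.1]{EFHPS-2} (see also \cite[Theorem 5.1]{EFHPS}) after the time rescaling $\tilde t=\e t$, together with the remark that the proofs there use only (BS) and never the balanced condition, so they carry over verbatim to the unbalanced $f$. Your parenthetical remark is exactly this route; the bulk of your write-up is instead the classical self-contained generation argument (in the spirit of \cite{AHM}) that underlies the cited result, and it checks out: the cancellation of the $\e^{-1}$-terms in $\mathcal L^\e w^\pm$, the reduction of the sub/super-solution inequality to $p'(t)\ge \e(|\De u_0|+|Y_{\xi\xi}/Y_\xi|\,|\na u_0|^2)$, the sharp derivative bound $Y_\xi\le C_0e^{\ga\tau}$ with rate exactly $\ga=f'(\a_*)$ via the substitution $y=Y(s,\xi)$ and the boundedness of $\int (f'(y)-\ga)/f(y)\,dy$ near $\a_*$ (which uses $f\in C^2$ and $f'(\a_*)=\ga$), and the ODE-time bookkeeping giving $M_0$ in parts (2)--(3) are all sound, with the quantifier order ($M_0,\e_0$ depending on $\de$) respected. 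What each route buys: the citation is short and consistent with the paper's overall strategy of importing the generation and entropy results by rescaling, while your explicit barrier construction makes transparent precisely which hypotheses enter (only (BS), plus $u_0\in C^2$ with $0<u_-\le u_0\le u_+<1$ from (BIP)), which is the paper's justification for using the balanced-case references. One cosmetic point: your uniform bounds on $Y_\xi$ and $Y_{\xi\xi}/Y_\xi$ should be stated for $\xi$ in a compact subset of $(0,1)$ containing $[u_-,u_+]\cup[\a_-,\a_+]$ (the range $[u_-,u_+]$ need not lie inside $[\a_-,\a_+]$), but the same change-of-variables argument gives this, since the only delicate point is the neighborhood of $\a_*$.
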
 

Note that Theorem \ref{t:gen:Pe} provides bounds on $u^\e$ at the time point $t^\e$, but not at any other time $t \in (0, t^\e)$. For the purpose of proving Theorem \ref{t:PDE} it is not necessary to consider $u^\e$ on $(0, t^\e)$, because for any $t > 0$ fixed it holds that $t > t^\e$ for $\e$ small enough. Instead, use Theorem \ref{t:gen:Pe} in the proof of Theorem \ref{t:prop:uN} on the propagation of the interface.

Finally, we cite a simplified version of \cite[Theorem 6.1]{EFHPS}. It is the discrete counterpart of Theorem \ref{t:gen:Pe}. In preparation, we define 
\begin{align*} 
	\bar\gamma \equiv \max_{\zeta\in 
[u_- \wedge \a_-, u_+\vee\a_+]} -f'(\zeta) > 0, \quad
	t^N \equiv
\frac1{2 \gamma \sqrt K} \log K
\end{align*}
and
\begin{equation} \label{phi}
  \theta \equiv \frac {2\gamma}{3\gamma + \bar\gamma} \in (0, \tfrac23).
\end{equation}
The time $t^N$ is the analogue of $\tilde t^N = \frac1{2 \gamma K} \log K = K^{-1/2} t^N$ in \cite{EFHPS,EFHPS-2} so that $u^N(t^N,v) = \tilde u^N(\tilde t^N,v)$ holds. 

\begin{thm}  \label{t:gen:PNK}
Let $K \equiv K(N) \to \infty$ with $K = o(N^\theta)$ as $N \to \infty$.
Let $u^N(t,\cdot)$ be the solution to \eqref{PNK}. Then, for any $\de\in (0,\de_0)$ there exist $N_0, M_0>0$ such that the following hold for
every $N \ge N_0$: 

\noindent (1) For all $x\in \T_N^d$,
$$ 
\alpha_--\de\le u^N(t^N,x) \le \alpha_++\de.
$$
(2) If $u_0(\tfrac{x}N) \ge \a_*+M_0 K^{-1/2}$, then
$$
u^N(t^N,x) \ge \alpha_+-\de.
$$
(3) If $u_0(\tfrac{x}N) \le \a_*-M_0 K^{-1/2}$, then
$$
u^N(t^N,x) \le \alpha_-+\de.
$$
\end{thm}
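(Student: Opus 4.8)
\textbf{Proof proposal for Theorem~\ref{t:gen:PNK}.}
The plan is to obtain Theorem~\ref{t:gen:PNK} by rescaling time in the corresponding discrete statement \cite[Theorem 6.1]{EFHPS} (a discrete counterpart of Theorem~\ref{t:gen:Pe}), using the correspondence already set up in this section. Recall that the solution $\tilde u^N$ of the rescaled discrete equation with the same initial data $u_0^N$ satisfies $\tilde u^N(\tilde t,x)=u^N(t,x)$ with $\tilde t = t/\sqrt K$, and that $t^N$ was defined precisely so that $\tilde t^N \equiv \frac1{2\gamma K}\log K = K^{-1/2} t^N$, i.e.\ $u^N(t^N,x)=\tilde u^N(\tilde t^N,x)$. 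Hence any bound on $\tilde u^N(\tilde t^N,\cdot)$ transfers verbatim to $u^N(t^N,\cdot)$, and it suffices to establish the three bounds for $\tilde u^N(\tilde t^N,\cdot)$.

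Concretely, I would proceed as follows. \cite[Theorem 6.1]{EFHPS} provides, for the rescaled discrete problem and under the hypothesis $K=o(N^\theta)$ with $\theta$ as in \eqref{phi}, exactly the three bounds (1)--(3) for $\tilde u^N(\tilde t^N,\cdot)$, with $K^{-1/2}$ playing the role of $\varepsilon$ in Theorem~\ref{t:gen:Pe}; the constants $\gamma=f'(\alpha_*)$, $\bar\gamma$, $\theta$ and $\delta_0$ appearing in Theorem~\ref{t:gen:PNK} are defined to be exactly the quantities controlling that theorem (the linearized growth rate of $\dot u=f(u)$ at the unstable zero $\alpha_*$, the contraction rate near the stable zeros, the resulting admissible exponent for $K$, and the gap from $\alpha_*$ to $\{\alpha_-,\alpha_+\}$), so the hypotheses match without any adjustment. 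The one point deserving a remark is that \cite{EFHPS} states the result for a balanced nonlinear term, whereas our $f$ is unbalanced; but, as noted at the beginning of this section, the proof of \cite[Theorem 6.1]{EFHPS} never uses balancedness --- it is a local ODE-comparison and barrier argument near the three zeros of $f$ --- and so applies unchanged under (BS). Substituting $u^N(t^N,x)=\tilde u^N(\tilde t^N,x)$ into the three bounds then yields (1)--(3), completing the proof.

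I do not anticipate a serious obstacle, as the argument is essentially bookkeeping; the only place where care is genuinely needed is in matching the admissible range of $K$. One must check that the exponent $\theta$ in \eqref{phi} is precisely the one produced by the error analysis of \cite{EFHPS} --- in particular that no additional nonlinear-diffusion correction term enters (as it does in the proofs invoked for Theorem~\ref{t:Ent:est}), which is the case here because the discrete Laplacian in the rescaled version of \eqref{PNK} is linear --- and that the intermediate time $\tilde t^N$ lies within the time window on which \cite[Theorem 6.1]{EFHPS} is valid.
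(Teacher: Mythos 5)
Your proposal matches the paper's own treatment: Theorem \ref{t:gen:PNK} is obtained there exactly by the time rescaling $\tilde t = K^{-1/2}t$ (so that $u^N(t^N,\cdot)=\tilde u^N(\tilde t^N,\cdot)$), citing the discrete generation theorem of El Kettani--Funaki--Hilhorst--Park--Sethuraman for the rescaled problem, and observing that its proof never uses the balancedness of $f$. Your additional remarks on matching the exponent $\theta$ and the absence of nonlinear-diffusion corrections are consistent with the paper's discussion, so nothing further is needed.
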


\section{Proof of Theorem \ref{t:main}}
\label{s:pf:PDE}

From the overview of the proof of Theorem \ref{t:main} in Section \ref{s:intro:pf:overview} and the entropy estimate in Theorem \ref{t:Ent:est}, we infer that Theorem \ref{t:main} follows once Theorem \ref{t:PDE} is proven. In the remainder of this section we focus on proving Theorem \ref{t:PDE}. 

\subsection{Overview of the proof}
\label{s:pf:PDE:overview}

As preparation, we establish comparison theorems for \eqref{Pe} and \eqref{PNK} in Section \ref{s:pf:PDE:CP}.
Then, we prove the version of Theorem \ref{t:PDE} in which $\hat u^N$ is replaced by $u^\e$. In this proof we construct sub and super solutions $u_\e^\pm$ to \eqref{Pe} such that $u_\e^- \leq u^\e \leq u_\e^+$ and $u_\e^\pm (t, \cdot) \to \chi_{\Gamma_t}$ as $\e \to 0$. We do this in Section \ref{s:pf:PDE:prop}. Then, by using that \eqref{PNK} is the central differences approximation of \eqref{Pe}, we derive in Section \ref{s:pf:PDE:prop:uN} similar upper and lower bounds on $u^N$ as those on $u^\e$. In Section \ref{s:pf:PDE:pf} we put these bounds together to obtain Theorem \ref{t:PDE}.

\subsection{Comparison theorems for \eqref{Pe} and \eqref{PNK}}  
\label{s:pf:PDE:CP}

It is well-known that \eqref{Pe} satisfies a comparison principle. In order to state it, we recall that functions $u^+(t,v)$ and $u^-(t,v)$ are super and sub solutions to \eqref{Pe}
if $u^+$ and $u^-$ satisfy the PDE in \eqref{Pe} with \lq\lq$\ge$"
and \lq\lq$\le$" instead of \lq\lq$=$", respectively. Regarding notation, note that $u^\pm$ denote super/sub solutions and that $u_\pm$ are the given constants from (BIP).

\begin{lem}\label{l:CP:Pe}
Consider \eqref{Pe} with initial conditions $u^-(0,v) \le u^+(0,v)$.   Then, any corresponding super and sub solutions $u^+(t,v)$ and $u^-(t,v)$ to \eqref{Pe} satisfy
$$u^-(t,v) \le u^+(t,v) \qquad \text{for all } t\geq 0 \text{ and all } v \in \T^d.$$
Furthermore, suppose (BIP) holds for some $0<u_-<u_+<\infty$. Then,
for $t\ge 0$ and $v \in \T^d$, we have for the solution $u^\e$ to \eqref{Pe} with initial condition $u_0$ that
$$u_-\wedge \alpha_- \le u^\e(t,v)\le u_+\vee\alpha_+.$$
\end{lem}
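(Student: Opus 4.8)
The plan is to prove the two assertions of Lemma~\ref{l:CP:Pe} in order: first the abstract comparison principle for sub/super solutions, then the explicit two-sided bound on $u^\e$ by exhibiting constant sub/super solutions and applying the comparison principle.

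\textbf{Comparison principle.}
Set $w \equiv u^+ - u^-$, so that $w(0,\cdot)\ge 0$ and $w$ satisfies the differential inequality
\[
  \partial_t w \ge \e\Delta w + \frac1\e\bigl(f(u^+)-f(u^-)\bigr)
  = \e\Delta w + \frac1\e\, b(t,v)\, w,
\]
where $b(t,v) \equiv \int_0^1 f'\bigl(s u^+(t,v)+(1-s)u^-(t,v)\bigr)\,ds$ is bounded because $f\in C^2$ and the arguments of $f'$ stay in a bounded set (by continuity of the solutions on $[0,T]\times\T^d$; note $\T^d$ is compact and has no boundary, so no boundary terms appear). Then I would run the standard maximum-principle argument: fix $T_0>0$, pick $\lambda > \sup|b|/\e$, and consider $\tilde w \equiv e^{-\lambda t} w$; at an interior negative minimum of $\tilde w$ on $[0,T_0]\times\T^d$ one gets $\partial_t\tilde w \le 0$, $\Delta\tilde w \ge 0$, and $(\e\Delta - \frac1\e b + \lambda)\tilde w \ge 0$ at that point, forcing $\tilde w \ge 0$ there — contradiction unless $\tilde w\ge0$ everywhere. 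Hence $w\ge0$ on $[0,T_0]\times\T^d$, and since $T_0$ is arbitrary, $u^-\le u^+$ for all $t\ge0$.

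\textbf{Two-sided bound on $u^\e$.}
Here I would check that the constants $\underline{m}\equiv u_-\wedge\alpha_-$ and $\overline{m}\equiv u_+\vee\alpha_+$ are a sub and a super solution of \eqref{Pe}, respectively. For a constant $m$, $\e\Delta m = 0$, so $m$ is a super solution iff $f(m)\le 0$ and a sub solution iff $f(m)\ge 0$. By (BS), $f>0$ on $(-\infty,\alpha_-)\cup(\alpha_*,\alpha_+)$ and $f<0$ on $(\alpha_-,\alpha_*)\cup(\alpha_+,\infty)$ near the relevant ranges; in particular $f(\alpha_\pm)=0$, $f\le 0$ on $[\alpha_+,\infty)$ locally and $f\ge0$ on $(-\infty,\alpha_-]$ locally. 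Since (BIP) gives $0<u_-<\alpha_*<u_+$, we have $\underline m = u_-\wedge\alpha_- \le \alpha_-$, so $f(\underline m)\ge 0$ (it lies in $[\,0$-region $]$ left of $\alpha_-$ or equals $f(\alpha_-)=0$); likewise $\overline m = u_+\vee\alpha_+\ge\alpha_+$, so $f(\overline m)\le 0$. Thus $\underline m$ is a sub solution and $\overline m$ a super solution. Moreover $\underline m \le u_0(v) \le \overline m$ for all $v$ by definition of $u_\pm$ and the fact that $u_0$ takes values in $(u_-,u_+)$ together with $\alpha_-<\alpha_*<\alpha_+$. Applying the comparison principle twice — once with $(u^-,u^+)=(\underline m, u^\e)$ and once with $(u^-,u^+)=(u^\e,\overline m)$ — yields $\underline m \le u^\e(t,v)\le\overline m$.

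\textbf{Main obstacle.}
The only delicate point is making the sign analysis of $f$ on the constants $\underline m,\overline m$ airtight when $u_\pm$ happen to be extremely close to, or on the "wrong side" of, $\alpha_\pm$; but the use of $\wedge\alpha_-$ and $\vee\alpha_+$ is precisely designed so that the constants land in a region where the sign of $f$ is unambiguous from (BS), so this is routine. A minor technical care is ensuring the coefficient $b(t,v)$ above is genuinely bounded; this follows since the solution $u^\e$ is classical and bounded a priori (e.g. by a crude invariant-region argument with a large enough constant), so $f'$ is evaluated on a compact interval. No serious difficulty is expected here — this lemma is essentially a packaging of standard parabolic comparison for a smooth reaction term on a compact manifold.
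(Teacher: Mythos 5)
The paper offers no proof of this lemma — it is stated as well-known (with the discrete analogue deferred to \cite[Section 2.5]{FS}) — so there is nothing to compare against; your argument is the standard one that any reader would supply, and it is correct. Both halves check out: the exponential-weight maximum principle works because classical sub/super solutions are bounded on $[0,T_0]\times\T^d$ and $f$ is a polynomial, so the coefficient $b$ is bounded; and the sign analysis for the constant barriers is unambiguous since (BS) forces $f>0$ on $(0,\alpha_-)$ and $f<0$ on $(\alpha_+,1)$, while (BIP) keeps $u_\pm$ inside $(0,1)$.
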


The equation in \eqref{PNK}
 satisfies a comparison principle too; cf.\ \cite[Section 2.5]{FS}. 
We say that profiles $u(\cdot)=(u_x)_{x\in \T_N^d}$ and 
$v(\cdot) =(v_x)_{x\in \T_N^d}$ are ordered $u(\cdot)\ge v(\cdot)$ 
when $u_y\ge v_y$ for all $y\in \T_N^d$. We call $u^+(t,\cdot)$ and $u^-(t,\cdot)$ super and sub solutions to \eqref{PNK}
if $u^+$ and $u^-$ satisfy \eqref{PNK} with \lq\lq$\ge$"
and \lq\lq$\le$" instead of \lq\lq$=$", respectively.

\begin{lem}\label{l:CP:PNK}
Let $N \geq 1$ and consider \eqref{PNK} with initial conditions $u^-(0,\cdot) \le u^+(0,\cdot)$.   Then, any corresponding sub and super solutions $u^-(t,\cdot)$ and $u^+(t,\cdot)$ to \eqref{PNK} satisfy
$$u^-(t,\cdot) \le u^+(t,\cdot) \qquad \text{for all } t\geq 0.$$
Furthermore, suppose that (BIP) holds for the given $N$, i.e.\  $u_-\le u^N(0,\cdot)\le u_+$ for some $0<u_-<u_+<\infty$. Then,
for $t\ge 0$ and $x\in \T^d_N$, we have
$$u_-\wedge \alpha_- \le u^N(t,x)\le u_+\vee\alpha_+.$$
\end{lem}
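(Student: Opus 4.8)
\textbf{Proof plan for Lemma \ref{l:CP:PNK}.}

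The plan is to treat \eqref{PNK} as a system of ODEs on the finite-dimensional state space $\R^{\T_N^d}$ and to invoke the standard comparison principle for cooperative (quasimonotone) ODE systems. First I would observe that the right-hand side of \eqref{PNK}, viewed as a vector field $F(u)_x = \frac1{\sqrt K}\De^N u(x) + \sqrt K f(u(x))$, is Lipschitz on bounded sets (since $f \in C^2$), so the initial value problem has a unique solution, and $\De^N$ is an off-diagonally nonnegative linear operator: $\partial F(u)_x / \partial u(y) = \frac{N^2}{\sqrt K} \geq 0$ whenever $|x-y| = 1$, and the diagonal contribution $\sqrt K f'(u(x))$ only affects the $x$-component. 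This is exactly the cooperativity condition needed for Kamke--M\"uller type comparison. The cleanest way to run the argument rigorously is the perturbation trick: for $\lambda > 0$ set $w^+ = u^+ + \lambda e^{Lt}(1,\dots,1)$ with $L$ a Lipschitz constant for $F$ on the relevant bounded set; then $w^+$ is a strict supersolution, $w^+(0,\cdot) > u^-(0,\cdot)$, and if the ordering $w^+(t,\cdot) \geq u^-(t,\cdot)$ were to fail one looks at the first time $t_0$ and site $x_0$ where $w^+(t_0,x_0) = u^-(t_0,x_0)$ with equality touched from above; at that point $\partial_t(w^+ - u^-)(t_0,x_0) \le 0$, while the cooperative structure of $\De^N$ together with $w^+ \ge u^-$ elsewhere and the strictness forces $\partial_t(w^+ - u^-)(t_0,x_0) > 0$, a contradiction. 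Letting $\lambda \downarrow 0$ gives $u^+(t,\cdot) \ge u^-(t,\cdot)$ for all $t \ge 0$. (Alternatively one may cite \cite[Section 2.5]{FS} directly, as the statement already does.)

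For the second assertion I would apply the comparison principle just established with the two given solutions being $u^N$ itself and suitable constant-in-space sub/supersolutions. Set $\underline m \equiv u_- \wedge \alpha_-$ and $\overline m \equiv u_+ \vee \alpha_+$. For the constant profile $u^-(t,\cdot) \equiv \underline m$ we have $\De^N u^- = 0$, so $u^-$ is a subsolution of \eqref{PNK} provided $f(\underline m) \ge 0$; by (BS), $f > 0$ on $(0,\alpha_-)$ wait — one must check the sign carefully. By (BS), $f$ has zeros at $\alpha_- < \alpha_* < \alpha_+$ with $f'(\alpha_\pm) < 0$, $f'(\alpha_*) > 0$, so $f > 0$ on $(0,\alpha_-)$ and on $(\alpha_*,\alpha_+)$, and $f < 0$ on $(\alpha_-,\alpha_*)$ and on $(\alpha_+,1)$; in particular $f \ge 0$ on $[\underline m, \alpha_-]$ when $\underline m = u_- < \alpha_-$, and $f(\alpha_-) = 0$ — so on $[\,\underline m,\alpha_-]$ one has $f\ge 0$, giving that the constant $\underline m$ is a subsolution only at points where $u^N$ has not yet risen above $\alpha_-$. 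The correct and standard way around this is not to use the bare constants but to note that $\underline m$ and $\overline m$ are themselves \emph{equilibria or one-sided barriers for the reaction term on the interval they bound}: since $\alpha_-$ and $\alpha_+$ are stable zeros of $f$, the interval $[\,\underline m, \overline m]$ is forward-invariant for the ODE $\dot v = f(v)$ (any trajectory starting in $[u_-,\alpha_-]$ increases toward $\alpha_-$, any trajectory in $[\alpha_+, u_+]$ decreases toward $\alpha_+$, and $[\alpha_-,\alpha_+]$ is invariant), hence $v^-(t) \equiv$ (solution of $\dot v = f(v)$, $v(0) = \underline m$) and $v^+(t) \equiv$ (solution with $v(0) = \overline m$) are spatially constant sub- and supersolutions of \eqref{PNK} satisfying $v^-(0) \le u^N(0,x) \le v^+(0)$ for all $x$ by (BIP). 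By the first part, $v^-(t) \le u^N(t,x) \le v^+(t)$, and since $v^\pm$ stay in $[\,\underline m, \overline m]$ for all time, we conclude $u_- \wedge \alpha_- \le u^N(t,x) \le u_+ \vee \alpha_+$.

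The only genuine subtlety — and the step I would spend the most care on — is the one just flagged: verifying that the reaction ODE $\dot v = f(v)$ leaves the interval $[u_-\wedge\alpha_-,\ u_+\vee\alpha_+]$ invariant, which is where assumption (BS) enters (the stability $f'(\alpha_\pm) < 0$ and the sign pattern of $f$ are exactly what is needed, so that neither endpoint is crossed). Everything else — uniqueness of solutions, the cooperativity of $\De^N$, and the Kamke-type touching argument — is entirely routine and could also simply be attributed to \cite[Section 2.5]{FS} as in the statement of the lemma. One should also remark that the hypothesis $0 < u_- < u_+ < \infty$ rather than $u_+ < 1$ is harmless here: \eqref{PNK} makes sense for any real data, and the bound produced is $u_+ \vee \alpha_+$, with no upper constraint required, in contrast to the exclusion-process interpretation where densities lie in $[0,1]$.
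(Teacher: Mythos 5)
Your proposal is correct. The paper itself gives no proof of Lemma \ref{l:CP:PNK}: it is stated as known, with only the pointer ``cf.\ \cite[Section 2.5]{FS}'', so there is no argument to compare against other than that citation; your self-contained Kamke--M\"uller argument (cooperativity of $\De^N$, the $\lambda e^{Lt}$ perturbation to make the supersolution strict, first-touching contradiction, then $\lambda \downarrow 0$) is exactly the standard proof one would write out and is what \cite{FS} does in spirit. Two small remarks on the second half. First, your mid-proof hesitation is unfounded: the bare constants $\underline m = u_-\wedge\alpha_-$ and $\overline m = u_+\vee\alpha_+$ \emph{are} already spatially constant sub- and supersolutions of \eqref{PNK}, since $\De^N$ annihilates constants and (BS) forces $f\ge 0$ on $[0,\alpha_-]$ and $f\le 0$ on $[\alpha_+,1]$ (there are no zeros of $f$ in $(0,\alpha_-)$ or $(\alpha_+,1)$ and the signs of $f'(\alpha_\pm)$ fix the sign on each side); the claim that $\underline m$ ``is a subsolution only at points where $u^N$ has not yet risen above $\alpha_-$'' confuses the pointwise condition $f(\underline m)\ge 0$ (which is all that is required of the constant profile) with the behaviour of $u^N$ itself. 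So the detour through the reaction ODE is unnecessary, though it is also valid --- with the caveat that the exact spatially homogeneous solution of \eqref{PNK} solves $\dot v = \sqrt K f(v)$, not $\dot v = f(v)$; your $v^\pm$ are still one-sided barriers because $\sqrt K\ge 1$ and the sign of $f$ along each trajectory is constant, but it is cleaner to either carry the $\sqrt K$ or just use the constants. Second, the ``for all $t\ge 0$'' in the statement implicitly uses that the invariant region $[\underline m,\overline m]$ yields global existence of $u^N$ from local existence; worth one sentence, but entirely routine for a finite ODE system.
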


\subsection{Propagation of the interface of $u^\e$}
\label{s:pf:PDE:prop}

Here we construct the sub and super solutions $u_\e^-$ and $u_\e^+$ and show that they bound $u^\e$ and $u^N$ from below and above, as mentioned in the overview Section \ref{s:pf:PDE:overview}. The constructions rely on the intuition that $u^\e$ resembles a smeared out version of $\chi_{\Gamma_t}$, denoted $w^\e$, which has the following structure. Let $v \in \Gamma_t$ and $n$ be the normal direction of $\Gamma_t$ at $v$ directed towards $G_t^-$. Then $w^\e (t, v + z n) = U(\frac z\e)$ for some $\e$-independent one-dimensional transition layer $U$, where $z \in (-\delta, \delta)$ for $\delta$ small enough with respect to $\Gamma_t$. 

\subsubsection*{Definition and properties of the transition layer $U$}

Let $\Gamma_t$ be as in \eqref{P0} for a certain $c_* > 0$ which we specify later. First, we show in more detail the existence of a certain $T > 0$ for which $\Gamma_t$ is of class $C^4$ for all $t \in [0,T]$. Let $\overline{d}(t,v)$ be the signed distance function to $\Gamma_t$ defined by
\begin{align} \label{od}
	\overline{d}(t,v)
	\equiv
	\begin{cases}
		{\rm dist}(v, \Gamma_t)
		& \text{ for } v \in \overline{G_t^-}
		\\
		- {\rm dist}(v, \Gamma_t)
		& \text{ for } v \in G_t^+.
	\end{cases}
\end{align}
The choice of sign is such that $\Gamma_t$ propagates along $\nabla \overline d$. Since $\Gamma_0$ is of class $C^4$, we have that $\overline d(0, \cdot)$ is of class $C^4$ in a neighborhood $\mathcal N_0$ of $\Gamma_0$; see e.g.\ \cite{Fo}. Recalling \eqref{eq:Gt}, there exists $T > 0$ such that $\overline d(t,v) \equiv \overline d(0,v) - c_* t$ for all $t \in [0,T]$ and all $v \in \mathcal N_0$. This has two consequences. First,
\begin{equation} \label{od:PDE}
  \partial_t \overline{d} = - c_* = - c_* | \nabla \overline{d} |
\end{equation}
on $(0,T) \times \mathcal N_0$. Note for the normal velocity $V$ of $\Gamma_t$ in \eqref{P0} that $V = -\partial_t \overline{d} = c_*$. 
Second, by the regularity of $\overline d(0, \cdot)$ we have that $\overline d(t,\cdot)$ is of class $C^4$ on $\mathcal N_0$ for any $t \in [0,T]$. Since $\Gamma_t$ is the $0$-level set of $\overline d(t,v)$, we obtain from the implicit function theorem that $\Gamma_t$ is of class $C^4$.

Next we derive an equation for $U$. Using $\overline d$ we extend the domain of definition of $w^\e$ as  
\[
  w^\e(t,v) \equiv U(\xi ), \quad \xi \equiv \frac{\overline d(t,v)}\e
\]
for all $t \in [0,T]$ and all $v \in \T^d$. By substituting $w^\e$ into the PDE in \eqref{Pe}, we obtain, for each of the three terms of the PDE, that
\begin{align*}
  \partial_t w^\e
  &= U' (\xi) \frac{\partial_t \overline d}\e, \\
  \e \Delta w^\e
  &= \div \big( U' (\xi) \nabla \overline d \big)
  = U'' (\xi) \frac{|\nabla \overline d|^2}\e + U' (\xi) \Delta \overline d
  , \\
  \frac1\e f(w^\e) 
  &= \frac1\e f(U(\xi)).
\end{align*}
By collecting the three terms of order $O(\e^{-1})$ and by assuming that $v$ is close enough to $\Gamma_0$ such that \eqref{od:PDE} holds, we obtain the following equation for $U$: 
\begin{align}\label{U}
	\begin{cases}
	U'' + c_* U' + f(U) 
	= 0 \qquad \text{on } \R,
	\\
	U(-\infty) = \alpha_+, \ U(0)= \alpha_*, \ U(\infty) = \alpha_-.
	\end{cases}
\end{align}
Here, the ``boundary conditions" $U(\pm \infty) = \alpha_{\mp}$ correspond to $u^\e$ being close to either $\alpha_+$ far enough inside $G_t^+$ or $\alpha_-$ far enough inside $G_t^-$. In addition, the condition $U(0)= \alpha_*$ fixes the horizontal shift of $U$, and matches with the condition (BIP) on $u_0$.

While our derivation of the problem in \eqref{U} is formal, the problem itself is stated rigorously. The following properties of \eqref{U} are well-known (see e.g.\ \cite[Proposition 2.1]{Ga} for a statement and \cite[p.101--108]{Fi} for a proof). Classical solutions $U$ to \eqref{U} exist only for a unique wave speed $c_*>0$. Hence, this value of $c_*$ is determined implicitly in terms of $f$. In the remainder, we keep $c_*$ fixed. Then, the classical solution $U$ to \eqref{U} is unique. Moreover, $U \in C^4(\R)$ and $U'<0$. 

Next we list further properties of $U$ which we need later.

\begin{lem} \label{l:U:tails}
There exist constants $C, \lambda > 0$ such that $U$ satisfies
\begin{subequations} \label{U:tail:bds}
\begin{align}
  0 \leq \alpha_+ - U(z) & \leq C e^{\lambda z}, \\ 
  0 \leq U(z) - \alpha_- & \leq C e^{-\lambda z}, \\ 
  0 < -U'(z) & \leq C e^{-\lambda |z|} 
\end{align}  
\end{subequations}
for all $z \in \R$. 
\end{lem}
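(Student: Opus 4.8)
The plan is to analyze the linearization of the ODE in \eqref{U} at the two stable rest points $\alpha_\pm$ and to use the fact that these equilibria are hyperbolic saddles (for the first-order system) to extract exponential convergence of the heteroclinic orbit $U$ to its limits. Write the second-order equation $U'' + c_* U' + f(U) = 0$ as the planar first-order system $U' = W$, $W' = -c_* W - f(U)$. The equilibria are $(\alpha_\pm, 0)$ and $(\alpha_*, 0)$; by (BS) we have $f'(\alpha_\pm) < 0$, so the Jacobian $\begin{pmatrix} 0 & 1 \\ -f'(\alpha_\pm) & -c_* \end{pmatrix}$ has eigenvalues $\lambda^\pm_{1,2} = \tfrac12\big(-c_* \pm \sqrt{c_*^2 - 4f'(\alpha_\pm)}\big)$, which are real with one strictly positive and one strictly negative (since $-f'(\alpha_\pm) > 0$ makes the product of the eigenvalues negative). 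Thus $(\alpha_\pm,0)$ are hyperbolic saddles.

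First I would invoke the stable/unstable manifold theorem: the orbit $(U(z), U'(z))$ approaches $(\alpha_-, 0)$ as $z \to +\infty$ along the one-dimensional stable manifold, on which solutions decay like $e^{\mu_- z}$ for $\mu_-$ the negative eigenvalue at $\alpha_-$; similarly it approaches $(\alpha_+, 0)$ as $z \to -\infty$ along the unstable manifold of $(\alpha_+,0)$, decaying like $e^{\mu_+ z}$ for $\mu_+$ the positive eigenvalue at $\alpha_+$. Concretely, one can set $\lambda \equiv \tfrac12 \min\{ |\mu_-|, \mu_+ \} > 0$ (or any positive number smaller than both $|\mu_-|$ and $\mu_+$) and conclude that there is a constant $C > 0$ with $|U(z) - \alpha_-| + |U'(z)| \leq C e^{-\lambda z}$ for $z \geq 0$ and $|U(z) - \alpha_+| + |U'(z)| \leq C e^{\lambda z}$ for $z \leq 0$. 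Since $U' < 0$ everywhere and $U$ is bounded between $\alpha_-$ and $\alpha_+$, the quantities $\alpha_+ - U(z)$, $U(z) - \alpha_-$ are automatically nonnegative, giving the sign parts of \eqref{U:tail:bds} for free. For the bound on $-U'$ valid for all $z \in \R$ with $e^{-\lambda|z|}$, combine the two one-sided estimates: $-U'(z) \leq C e^{-\lambda z}$ for $z \geq 0$ and $-U'(z) \leq C e^{\lambda z}$ for $z \leq 0$, i.e.\ $-U'(z) \leq C e^{-\lambda |z|}$.

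An alternative, more hands-on route avoiding the manifold theorem: for the tail at $+\infty$, fix $\delta$ small and use that for $z$ large, $U(z)$ lies in a neighborhood of $\alpha_-$ where $f(U) = f'(\alpha_-)(U - \alpha_-) + O((U-\alpha_-)^2)$ with $f'(\alpha_-) < 0$; then a Gronwall/differential-inequality argument on $g(z) \equiv U(z) - \alpha_-$ (using $g'' + c_* g' + f'(\alpha_-) g = O(g^2)$ and $g \to 0^+$, $g' \to 0$) yields exponential decay at rate close to $|\mu_-|$. One has to be a little careful to justify that $g$ and $g'$ actually tend to $0$ (which follows because $(\alpha_-,0)$ is the omega-limit of the bounded monotone orbit), and to absorb the quadratic remainder, but this is routine.

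The only genuine subtlety — the step I would flag as the "main obstacle," though it is mild — is ensuring the decay rates of $U$, $U - \alpha_\pm$, and $U'$ are governed by the same $\lambda$; this is immediate from the saddle structure because on the stable (resp.\ unstable) manifold the components $g$ and $g'$ decay at precisely the rate of the relevant eigenvalue, so taking $\lambda$ below $\min\{|\mu_-|, \mu_+\}$ handles all three simultaneously and on both sides. Everything else is a direct application of standard hyperbolic-equilibrium theory for the planar system, using only the already-established facts that $c_* > 0$, $U \in C^4(\R)$, $U' < 0$, the boundary conditions in \eqref{U}, and the sign conditions $f'(\alpha_\pm) < 0$ from (BS).
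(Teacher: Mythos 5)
Your proposal is correct and follows essentially the same route as the paper's proof: rewrite the ODE as a planar first-order system, observe that $(\alpha_\pm,0)$ are hyperbolic saddles because $f'(\alpha_\pm)<0$ forces one positive and one negative eigenvalue, and read off the exponential rates of the heteroclinic orbit from the stable/unstable eigenvalues, combining with $U'<0$ for the sign statements. The only difference is that you spell out the stable-manifold argument (and a Gronwall alternative) more explicitly than the paper, which simply asserts the asymptotic expansions.
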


\begin{proof}
Our proof is a modification of the proof of \cite[Lemma 2.1]{AHM}; see also [5, p. 101-108]. We write the second order ODE in \eqref{U} as the system of first order ODEs $w' = F(w)$, where $w  = (w_1, w_2)$ and $F(w) = (w_2, - c_* w_2 - f(w_1))$. Then, $w = (U, U')$ is a solution of this system, which connects the equilibrium $w_+ = (\alpha_+, 0)$ to  the equilibrium $w_- = (\alpha_-, 0)$. Linearizing the system around these equilibria, we find that the eigenvalues of the linear systems are given by
\[
  \lambda_p^\pm = - \frac{c_*}2 + p \sqrt{ \frac{c_*^2}4 - f'(\alpha_\pm) },
  \qquad \text{with } p = -1, +1.
\]
Since $f'(\alpha_\pm) < 0$, it follows that $\lambda_{-1}^\pm < 0 < \lambda_{+1}^\pm$. Hence, $(U(z), U'(z)) = w(z)$ is asymptotically given by $w_+ + r_+ e^{\lambda_{+1}^+ z}$ as $z \to -\infty$ and by $w_- + r_- e^{\lambda_{-1}^- z}$ as $z \to \infty$ for some given vectors $r_+, r_- \in \R^2$. Together with $U' < 0$ we conclude \eqref{U:tail:bds}.
\end{proof}

The following property is another estimate on $U'$, which we use in the proof of Lemma \ref{l:prop:ue}. In preparation for stating it, we define
\begin{equation} \label{beta}
  \beta \equiv \frac12 \min \{ |f'(\alpha_-)|, |f'(\alpha_+)| \}.
\end{equation}

\begin{lem} \label{l:Up:LB} 
For all $\sigma > 0$ small enough we have
\begin{equation} \label{Up:LB}
  U' \leq - \sigma (\beta + f'(U))
  \qquad \text{on } \R.
\end{equation}
\end{lem}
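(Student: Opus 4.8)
The plan is to split $\R$ into the region where $\beta + f'(U) \le 0$ and its complement: on the former the inequality \eqref{Up:LB} is trivial because $U'<0$, and on the latter it reduces to a compactness argument.

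First I would record the relevant sign facts. By (BS) we have $f'(\alpha_\pm) < 0$, and the definition \eqref{beta} of $\beta$ gives $f'(\alpha_\pm) \le -2\beta$, so $\beta + f'(\alpha_\pm) \le -\beta < 0$. Since $U(z) \to \alpha_\mp$ as $z \to \pm\infty$ (from \eqref{U}) and $f'$ is continuous, the function $z \mapsto \beta + f'(U(z))$ converges to $\beta + f'(\alpha_\mp) \le -\beta$ as $z \to \pm\infty$; hence the set
\[
  K \equiv \{\, z \in \R : \beta + f'(U(z)) \ge 0 \,\}
\]
is closed and bounded, therefore compact. On $\R \setminus K$ one has $\beta + f'(U(z)) < 0$, so $-\sigma(\beta + f'(U(z))) > 0 > U'(z)$ for every $\sigma > 0$ (recall $U'<0$), and \eqref{Up:LB} holds there for free, for any $\sigma$.

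On $K$ I would use that $U'$ is continuous and strictly negative, so $m \equiv \min_{z\in K}(-U'(z)) > 0$, while $M \equiv \max_{z\in K}(\beta + f'(U(z)))$ is finite and nonnegative. Then for any $\sigma$ with $0 < \sigma \le m/(M+1)$ we obtain $\sigma(\beta + f'(U(z))) \le \sigma M \le m \le -U'(z)$ for all $z \in K$, which is exactly \eqref{Up:LB} on $K$; combining the two regions, and noting that decreasing $\sigma$ only makes both inequalities easier, proves the lemma for all $\sigma$ small enough. There is no real obstacle here; the only points needing a little care are verifying that the ``bad set'' $K$ is compact (which is immediate from $U(\pm\infty)=\alpha_\mp$ and $f'(\alpha_\pm)<0$, or alternatively from the exponential tail bounds in Lemma \ref{l:U:tails}) and handling the degenerate possibility $M = 0$ cleanly, which is why I divide by $M+1$ rather than by $M$.
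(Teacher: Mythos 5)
Your proof is correct and follows essentially the same strategy as the paper's: far from the origin the right-hand side of \eqref{Up:LB} is positive (since $\beta + f'(U)$ tends to $\beta + f'(\alpha_\mp) \le -\beta < 0$) while $U' < 0$, and on the remaining compact set one uses $\min(-U') > 0$ to choose $\sigma$ small. The only cosmetic difference is that you carve out the ``bad'' region as the level set $\{\beta + f'(U) \ge 0\}$ rather than an explicit interval $[-R,R]$ obtained from the exponential tail bounds of Lemma \ref{l:U:tails}; both work.
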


\begin{proof}
The proof is a modification of \cite[Lemma 8]{EFHPS-2}. First, we show that \eqref{Up:LB} holds on the interval $[R, \infty)$ for $R > 0$ large enough, uniformly in $\sigma$. For any $R > 0$ and any $z \geq R$, we have by Lemma \ref{l:U:tails} that
\begin{equation*}
   |f'(U(z)) - f'(\alpha_-)|
   \leq \|f''\|_{C([\alpha_-, \alpha_+])} |U(z) - \alpha_-|
   \leq C e^{-\lambda R}.
\end{equation*} 
for some constants $C, \lambda > 0$. Hence,
\begin{equation*}
  \beta + f'(U(z))
  \leq \frac12 |f'(\alpha_-)| +  f'(\alpha_-) + C e^{-\lambda R}
  = \frac12 f'(\alpha_-) + C e^{-\lambda R},
\end{equation*}
which is negative for $R$ large enough. Fixing such an $R$, the right-hand side of \eqref{Up:LB} is positive on $[R, \infty)$. Since $U' < 0$ on $\R$, it follows that \eqref{Up:LB} holds on $[R, \infty)$. By a similar argument, it follows that \eqref{Up:LB} also holds on $(-\infty, -R]$, possibly for a larger $R$.

It is left to show that \eqref{Up:LB} holds on $(-R, R)$ for $\sigma$ small enough with respect to $R$. Since $U' < 0$ on $\R$, we have that $U' \leq -c_R$ on $(-R,R)$ for some constant $c_R > 0$. Then, taking $\sigma$ small enough with respect to $R$, we obtain for any $|z| < R$ that
\begin{equation*}
  U'(z)
  \leq - c_R
  \leq - \sigma \big( \beta + \| f' \|_{C([\alpha_-, \alpha_+])} \big)
  \leq - \sigma (\beta + f'(U(z))).
\end{equation*}
This completes the proof of Lemma \ref{l:Up:LB}.
\end{proof}

\subsubsection*{Construction of the sub and super solutions $u_\e^-$ and $u_\e^+$}

With the intuition that $u^\e(t,v) \approx U( \overline d(t,v) / \e )$, we are ready to construct the sub and super solutions $u_\e^-$ and $u_\e^+$. The idea is to add three perturbations to $U( \overline d(t,v) / \e )$; a horizontal shift of $U$, a vertical shift of $U$ and a regularization of $\overline d$ (recall that $\overline d$ is regular only in a certain neighborhood of $\Gamma_t$). 

With this aim, we introduce a cut-off signed distance function $d=d(t,v)$. Let
\[
  Q_T \equiv (0,T) \times \T^d.
\] 
Take $d_0 > 0$ small enough so that the signed distance function $\overline{d}=\overline{d}(t,v)$ from the interface $\Gamma_t$ satisfies \eqref{od:PDE} on $\Omega_{3 d_0}$ and is of class $C^{4}$ in $\overline \Omega_{3 d_0}$, where the time-space tubular neighborhood $\Omega_\delta \subset Q_T$ of $\Gamma_t$ with thickness $\delta > 0$ is defined by
$$
\Omega_\delta \equiv \{
(t,v) \in Q_T \mid  | \overline{d}(t,v) | < \delta
\}. 
$$
Such a $d_0 > 0$ exists because $T$ is chosen such that $\Gamma_t$ is of class $C^4$ for all $t \in [0,T]$.
Let $h(s)$ be a smooth non-decreasing function on $\mathbb{R}$ such that $0 \leq h' \leq 1$ and 
$$
	h(s) = 
	\begin{cases}
		s & \text{if}~ |s| \leq d_0\\
		-2d_0 & \text{if}~ s \leq -3d_0\\
		2d_0 & \text{if}~ s \geq 3d_0.
	\end{cases}
$$
We then define the cut-off signed distance function $d$ by 
$$
	d(t,v) = h(\overline{d}(t,v)), ~~~ (t,v) \in \overline{Q_T}.
$$
Note that $d \in C^4 (\overline{Q_T})$ by construction. Moreover, since $d$ coincides with $\overline{d}$ in $\Omega_{d_0}$,
we have 
\begin{align} \label{dt:est}
  |\partial_t d|
  &= |h'(\overline d) \partial_t \overline d| \leq c_* 
  &&\text{on } \Omega_{3 d_0}, \\\label{dt:PDE}
  \partial_t d 
  &= h'(\overline d) \partial_t \overline d = -c_*
  &&\text{on } \Omega_{d_0}, \\\label{dt:0}
  \partial_t d 
  &= 0
  &&\text{on } Q_T \setminus \Omega_{3 d_0}
\end{align}
and, similarly,
\begin{align}  \label{nabd:est}
  |\nabla d|
  &= |h'(\overline d) \nabla \overline d| \leq 1 
  &&\text{on } \Omega_{3 d_0}, \\
\label{nabd:0}
  \nabla d 
  &= 0
  &&\text{on } Q_T \setminus \Omega_{3 d_0}.
\end{align}

Using $d$, we construct the sub and super solutions as follows: given $0<\varepsilon <1$, we define  
\begin{equation}  \label{upm} 
	u^\pm(t,v) 
	\equiv u_\e^\pm(t,v)
	\equiv U
	\left(
		\frac{d(t,v)}\e \mp p(t)
	\right)
	\pm q(t), 
\end{equation}
where for the horizontal shift $p(t)$ and vertical shift $q(t)$ it turns out \textit{a posteriori} (for the purpose of proving Lemmas \ref{l:upm:0} and \ref{l:prop:ue} below) that
\begin{gather*}
    q(t) = 2 \sigma \beta \exp \Big(- \frac{ \beta t }{ 2 \varepsilon } \Big) + \frac{3 \e}\beta, \\
\begin{aligned}
    p(t) &=  L + C_{\Delta d} t + \frac1{\sigma \e} \int_0^t q(s) \, ds \\
    &= L + \Big( \frac3{\sigma \beta} + C_{\Delta d} \Big) t + 4 \Big( 1 - \exp \Big(- \frac{ \beta t }{ 2 \varepsilon } \Big) \Big)
\end{aligned}   
\end{gather*}
are appropriate choices,
where 
\[
  C_{\Delta d} \equiv \| \Delta d \|_{L^\infty (Q_T)}.
\]
Here, $\beta$ is defined in \eqref{beta} and $\sigma, L >0$ are constants which we specify below in Lemmas \ref{l:upm:0} and
\ref{l:prop:ue}. Note that $p, q \geq 0$. In comparison to the balanced case in \cite{EFHPS,EFHPS-2}, there are three differences:
\begin{enumerate}
  \item the expressions for $p,q$ are considerably different,
  \item in \eqref{upm} we write ``$\mp p(t)$" instead of ``$\pm p(t)$" because in the present paper we have switched the boundary conditions of $U$ at $\pm \infty$,
  \item we do not need a next order approximation term $\e U_1$ (also called corrector) of $U$ in \eqref{upm}.
\end{enumerate} 
Although we work on $\T^d$, if we take the viewpoint of working on $\R^d$, we may regard the signed distance function $d(t, \cdot)$ as periodic with period $1$ so that $u^\pm(t,\cdot)$ are periodic as well for all $t \in [0,T]$.

First we show that at $t = 0$, there exists an $L > 0$ such that $u^\pm(0,\cdot)$ sandwich both $u^\e(t^\e, \cdot)$ and $u^N(t^N, \cdot)$ for all $\sigma, \e, \frac1N$ small enough. 

\begin{lem}\label{l:upm:0}
There exist $L, \sigma_0, \e_0 > 0$
such that
\begin{align} \label{l:upm:0:ue}
	&u^-(0,v) 
	\leq 
	u^\e (t^\e,v)
	\leq 
	u^+(0,v)
\end{align}
for all $v \in \T^d$, all $\sigma \in (0,\sigma_0)$ and all $\varepsilon \in (0, \varepsilon_0)$.
Moreover, for $K = \frac1{\e^2} \to \infty$ with $K = o(N^\theta)$ (recall \eqref{phi}) as $N \to \infty$, there exists $N_0 \in \N$ such that 
\[
u^- \Big( 0, \frac xN \Big) 
\le u^N ( t^N, x ) 
\le u^+ \Big( 0, \frac xN \Big)
\]
for all $N \geq N_0$, all $\sigma \in (0,\sigma_0)$ and all $x \in \T_N^d$.
\end{lem}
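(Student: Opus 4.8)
The plan is to read off both sandwich estimates at time $t=0$ directly from the generation-of-interface theorems: Theorem~\ref{t:gen:Pe} pins down $u^\e(t^\e,\cdot)$ according to the sign and size of $u_0(\cdot)-\alpha_*$, and Theorem~\ref{t:gen:PNK} does the same for $u^N(t^N,\cdot)$ — its hypothesis $K=o(N^\theta)$ being exactly the one assumed here, and its threshold $M_0K^{-1/2}$ coinciding with $M_0\e$ since $K=\e^{-2}$. The two arguments are word-for-word the same after replacing $\e$ by $K^{-1/2}$, $v$ by $x/N$, and Theorem~\ref{t:gen:Pe} by Theorem~\ref{t:gen:PNK}, so I describe only the continuous one and comment on the discrete change at the end. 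I would fix $\sigma\in(0,\sigma_0)$ first (shrinking $\sigma_0$ so that $\delta:=\sigma\beta/2$ lies in $(0,\delta_0)$), note that the vertical shift obeys $q(0)=2\sigma\beta+3\e/\beta\ge 2\sigma\beta=4\delta$, and apply Theorem~\ref{t:gen:Pe} with this $\delta$ to get the constants $M_0$ and a first threshold $\e_0'$.

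The next step is to convert the hypotheses $\pm(u_0(v)-\alpha_*)\ge M_0\e$ of Theorem~\ref{t:gen:Pe}(2)--(3) into conditions on $d(0,v)$. By (BIP) the function $u_0-\alpha_*$ is $C^4$, vanishes on $\Gamma_0=\{u_0=\alpha_*\}$ with non-vanishing normal derivative there, and has sign opposite to $\overline d(0,\cdot)$; hence there are $\rho_0,\underline{c},m_0>0$ with $|u_0-\alpha_*|\ge\underline{c}\,|\overline d(0,\cdot)|$ on $\{|\overline d(0,\cdot)|\le\rho_0\}$ and $|u_0-\alpha_*|\ge m_0$ on $\{|\overline d(0,\cdot)|\ge\rho_0\}$. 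Setting $r_\e:=M_0\e/\underline{c}$ and $\e_0\le\e_0'$ small enough that $r_\e<\min(\rho_0,d_0)$ and $M_0\e<m_0$ for $\e<\e_0$, the torus splits into $\{\overline d(0,\cdot)\le-r_\e\}\subseteq\{u_0\ge\alpha_*+M_0\e\}$, $\{\overline d(0,\cdot)\ge r_\e\}\subseteq\{u_0\le\alpha_*-M_0\e\}$, and the thin tube $N_\e:=\{|\overline d(0,\cdot)|<r_\e\}$, on which $d(0,\cdot)=\overline d(0,\cdot)$ because $r_\e<d_0$; Theorem~\ref{t:gen:Pe} then gives $u^\e(t^\e,\cdot)\in[\alpha_+-\delta,\alpha_++\delta]$, $[\alpha_--\delta,\alpha_-+\delta]$ and $[\alpha_--\delta,\alpha_++\delta]$ on these three sets respectively. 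Finally I would fix $L$ large (depending on $\sigma$ through $M_0$ and on $u_0$ through $\underline{c}$) so that, by the tail bounds of Lemma~\ref{l:U:tails}, $\alpha_+-U(z)\le\delta$ for all $z\le M_0/\underline{c}-L$ and $U(z)-\alpha_-\le\delta$ for all $z\ge L-M_0/\underline{c}$; $L\ge M_0/\underline{c}+\lambda^{-1}\log(C/\delta)$ suffices.

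With these choices the upper bound $u^\e(t^\e,v)\le u^+(0,v)=U(d(0,v)/\e-L)+q(0)$ is verified set by set: on $N_\e$ and on $\{\overline d(0,\cdot)\le-r_\e\}$ the argument of $U$ is $\le M_0/\underline{c}-L$, so $U\ge\alpha_+-\delta$ and $u^+(0,v)\ge\alpha_+-\delta+4\delta>\alpha_++\delta\ge u^\e(t^\e,v)$; on $\{\overline d(0,\cdot)\ge r_\e\}$ one uses only $U\ge\alpha_-$ and $q(0)\ge4\delta$ to beat $u^\e(t^\e,v)\le\alpha_-+\delta$. The lower bound $u^\e(t^\e,v)\ge u^-(0,v)=U(d(0,v)/\e+L)-q(0)$ is symmetric, using $U\le\alpha_-+\delta$ on $N_\e\cup\{\overline d(0,\cdot)\ge r_\e\}$ and $U\le\alpha_+$ on $\{\overline d(0,\cdot)\le-r_\e\}$. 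The discrete claim follows from the same computation with $\e=K^{-1/2}$, $v=x/N$, the regions defined via $\overline d(0,x/N)$, and Theorem~\ref{t:gen:PNK} in place of Theorem~\ref{t:gen:Pe}, which produces some $N_0=N_0(\sigma)$. The one genuinely delicate point — and the reason the construction works at all — is the behaviour on the thin tube $N_\e$ around $\Gamma_0$: there the generation theorems give no information beyond $u^\e(t^\e,\cdot)\in[\alpha_--\delta,\alpha_++\delta]$, so one must combine the constant vertical shift $q(0)\ge 2\sigma\beta$ (which is why $\delta$ has to be chosen small relative to $\sigma$) with a large horizontal shift $L$ (chosen via Lemma~\ref{l:U:tails} relative to $M_0$) to push $u^+(0,\cdot)$ above $\alpha_++\delta$ and $u^-(0,\cdot)$ below $\alpha_--\delta$ throughout $N_\e$. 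This is also why $L$, $\e_0$ and $N_0$ end up depending on the fixed $\sigma$, $\sigma_0$ serving only to keep $\sigma$ — hence $\delta$ — small enough here and in Lemmas~\ref{l:Up:LB} and~\ref{l:prop:ue}.
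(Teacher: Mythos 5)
Your proposal is correct and follows essentially the same route as the paper's proof: apply Theorems \ref{t:gen:Pe} and \ref{t:gen:PNK} with $\delta$ proportional to $\sigma\beta$, convert the thresholds on $u_0-\alpha_*$ into thresholds on the signed distance via the non-degeneracy condition in (BIP), choose $L$ large so that $U$ is within $\delta$ of $\alpha_\pm$ outside the thin tube, and let the vertical shift $q(0)\geq 2\sigma\beta$ absorb the $\delta$-errors. Your extra details (the explicit three-region decomposition, choosing $L$ via Lemma \ref{l:U:tails}, and noting that $L,\e_0,N_0$ depend on the fixed $\sigma$) are consistent with, and merely flesh out, the paper's argument.
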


\begin{proof}
The following proof only uses the bounds on $u^\e(t^\e, \cdot)$ and $u^N(t^N, \cdot)$ which are stated in Theorems \ref{t:gen:Pe} and \ref{t:gen:PNK}. Since these bounds are similar, it suffices to prove \eqref{l:upm:0:ue}. In addition, since the proof of the lower bound is similar to that of the upper bound, we focus on the latter.

Take $\sigma_0 \leq \delta_0 / (2\beta)$ (recall $\delta_0$ from \eqref{cond_mu_eta0}) small enough such that Lemma \ref{l:Up:LB} applies for any $\sigma \in (0, \sigma_0)$.  
Taking $\sigma \in (0, \sigma_0)$, we apply Theorem \ref{t:gen:Pe} with $\delta = \sigma \beta \in (0, \delta_0)$ to obtain constants $M_0, \e_0 > 0$ such that 
\[
  u^\e (t^\e,v)
  \leq \begin{cases}
    \alpha_- + \sigma \beta
    &\text{if } u_0(v) \leq \alpha_* - M_0 \e \\
    \alpha_+ + \sigma \beta
    &\text{otherwise} 
  \end{cases} 
\] 
for all $v \in \T^d$ and $\e \in (0,\e_0)$.
Then, by the non-degeneracy of $\nabla u_0$ on $\Gamma_0$ (see assumption (BIP)), there exists a constant $M_1>0$ such that
\[
  u^\e (t^\e,v)
  \leq \begin{cases}
    \alpha_- + \sigma \beta
    &\text{if } \overline d(0,v) > M_1 \e \\
    \alpha_+ + \sigma \beta
    &\text{otherwise} 
  \end{cases}
\]
for all $v \in \T^d$. By taking $\e_0$ smaller if necessary, we have that the condition $\overline d(0,v) > M_1 \e$ is equivalent to $d(0,v) > M_1 \e$.
Furthermore, let $L > M_1$ be large enough such that $U(M_1 - L) \geq \alpha_+ - \sigma \beta$. Then, since $U$ is decreasing and $U \geq \alpha_-$ on $\R$, we obtain from $p(0) = L$ and $q(0) \geq 2 \sigma \beta$ that
\[
  u^\e (t^\e,v) 
  \leq U ( M_1 - L ) + 2 \sigma \beta 
  \leq U \left( \frac{d(0,v)}\e - p(0) \right) + q(0)
  = u^+(0,v)
\]
for all $v \in \T^d$. This completes the proof.
\end{proof}

We proceed with showing sufficient properties of $u^\pm(t, \cdot)$ for all $t \in [0, T]$. With this aim, we rewrite the PDE in \eqref{Pe} as
\begin{equation} \label{cL}
  \mathcal{L}^\e u \equiv \partial_tu - \e \Delta u - \frac{1}{\varepsilon} f(u) = 0.
\end{equation}

\begin{lem}\label{l:prop:ue}
There exist $\sigma, L, \e_0 > 0$
such that
\begin{align} \label{Lem_Prop_subsuper:t0}
	&u^-(t - t^\e,v) 
	\leq 
	u^\e (t,v)
	\leq 
	u^+(t - t^\e,v)
	&& \text{for all } v \in \T^d, t \in [t^\e, T] \text{ and } \varepsilon \in (0, \varepsilon_0), \\\label{Lem_Prop_subsuper:vs}
    &\mathcal{L}^\e u^- \leq -1 < 1 \leq \mathcal{L}^\e u^+
	&&	\text{in } [0,T]\times\T^d \text{ for all } \varepsilon \in (0, \varepsilon_0) \text{, and} \\\label{Lem_Prop_subsuper:lim}
	&\lim_{\e \to 0} u^\pm(t - t^\e,v) = \chi_{\Gamma_t}(v)
	&& \text{for all } t \in (0,T] \text{ and } v \in \T^d \setminus \Gamma_t.
\end{align}
In particular, $u^\pm(t - t^\e,v)$ are sub and super solutions for Problem $(P^\varepsilon)$ for $t \in [t^\e,T]$.
\end{lem}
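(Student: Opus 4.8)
The plan is to prove \eqref{Lem_Prop_subsuper:t0}, \eqref{Lem_Prop_subsuper:vs} and \eqref{Lem_Prop_subsuper:lim} in turn, with the sub/super-solution inequality \eqref{Lem_Prop_subsuper:vs} as the technical core; the other two then follow from it together with Lemmas \ref{l:upm:0} and \ref{l:CP:Pe} and a short limit computation, respectively. First I fix the free constants: I take $\sigma$ small enough that Lemma \ref{l:Up:LB} applies and that $2\sigma\beta<\tfrac12\beta/\|f''\|_\infty$ (with $\|f''\|_\infty$ the $C^2$-bound of $f$ on a fixed neighbourhood of $[\alpha_-,\alpha_+]$), take $L$ and $\e_0$ as produced by Lemma \ref{l:upm:0} for this $\sigma$, and shrink $\e_0$ once more at the end to absorb the error terms below. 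Throughout I abbreviate $z\equiv d(t,v)/\e\mp p(t)$ for the argument of $U$ in $u^\pm$, and I record the two elementary identities $p'=C_{\Delta d}+q/(\sigma\e)$ and $q'+\tfrac{\beta}{2\e}\,q=\tfrac32$, both obtained by differentiating the explicit formulas for $p$ and $q$; the second is the reason behind the particular choice of $q$.

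For \eqref{Lem_Prop_subsuper:vs} I insert \eqref{upm} into the operator $\mathcal L^\e$ of \eqref{cL}, using $\partial_tu^\pm=U'(z)(\partial_td/\e\mp p')\pm q'$, $\e\Delta u^\pm=U''(z)|\nabla d|^2/\e+U'(z)\Delta d$ and $\e^{-1}f(u^\pm)=\e^{-1}f(U(z)\pm q)$, and I split $Q_T$ into $\Omega_{d_0}$, $\Omega_{3d_0}\setminus\Omega_{d_0}$ and $Q_T\setminus\Omega_{3d_0}$. On $\Omega_{d_0}$, where $d=\overline d$ so $|\nabla d|=1$ and $\partial_td=-c_*$ by \eqref{dt:PDE}, the ODE \eqref{U} for $U$ annihilates every $O(\e^{-1})$ term and leaves $\mathcal L^\e u^+=-U'(z)(p'+\Delta d)+q'-\e^{-1}\big(f(U(z)+q)-f(U(z))\big)$; here $p'+\Delta d\ge q/(\sigma\e)\ge0$ since $\Delta d\ge-C_{\Delta d}$, so $-U'(z)(p'+\Delta d)\ge\e^{-1}q\,(\beta+f'(U(z)))$ by Lemma \ref{l:Up:LB}, and a second-order Taylor expansion of $f$ cancels the $f'(U(z))\,q$ contributions, leaving $\mathcal L^\e u^+\ge\beta q/\e+q'-\tfrac{\|f''\|_\infty}{2\e}q^2=\tfrac32+\tfrac{q}{2\e}\big(\beta-\|f''\|_\infty q\big)\ge\tfrac32>1$ by the identity for $q$ and $\|f''\|_\infty q<\beta$. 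On the two regions away from $\Gamma_t$ one has $|z|\ge d_0/\e-p(T)\to\infty$, so Lemma \ref{l:U:tails} makes $U'(z)$, $U''(z)$ and $U(z)-\alpha_\pm$ exponentially small in $1/\e$; using also \eqref{dt:est}, \eqref{dt:0}, \eqref{nabd:est} and \eqref{nabd:0} this makes $\e\Delta u^+$ and the $U'(z)\partial_td/\e$ part of $\partial_tu^+$ negligible, while $U(z)+q$ lies in a one-sided neighbourhood of $\alpha_-$ or $\alpha_+$ on which $f'\le-\beta$ (by \eqref{beta} and smallness of $\sigma,\e$), whence $-\e^{-1}f(u^+)\ge\beta q/\e-o(1)$; combined with the identity for $q$ this again gives $\mathcal L^\e u^+\ge\tfrac32-o(1)>1$ for $\e$ small. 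The bound $\mathcal L^\e u^-\le-1$ is the symmetric statement, and since $\mathcal L^\e u^+>0>\mathcal L^\e u^-$ and time translation preserves the PDE, $u^\pm(\cdot-t^\e,\cdot)$ are super/sub solutions of \eqref{Pe} on $[t^\e,T]$, which is the last assertion of the lemma.

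Granting \eqref{Lem_Prop_subsuper:vs}, the sandwich \eqref{Lem_Prop_subsuper:t0} is a comparison-principle argument: by Lemma \ref{l:upm:0} we have $u^-(0,\cdot)\le u^\e(t^\e,\cdot)\le u^+(0,\cdot)$, and $t\mapsto u^\pm(t-t^\e,v)$ are a sub and a super solution of \eqref{Pe} on $[t^\e,T]$; applying Lemma \ref{l:CP:Pe} (with the initial time $t^\e$ in place of $0$, which is immaterial) first to $(u^-(\cdot-t^\e),u^\e)$ and then to $(u^\e,u^+(\cdot-t^\e))$ yields \eqref{Lem_Prop_subsuper:t0}. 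For \eqref{Lem_Prop_subsuper:lim} I let $\e\to0$: then $t^\e\to0$, hence $d(t-t^\e,v)\to d(t,v)$ by continuity, $q(t-t^\e)\to0$, and $p(t-t^\e)$ tends to the finite constant $L+(3/(\sigma\beta)+C_{\Delta d})t+4$. For $v\notin\Gamma_t$ the value $d(t,v)$ is nonzero with the sign of $\overline d(t,v)$, so the argument $d(t-t^\e,v)/\e\mp p(t-t^\e)$ tends to $+\infty$ when $v\in G_t^-$ and to $-\infty$ when $v\in G_t^+$; since $U(+\infty)=\alpha_-$, $U(-\infty)=\alpha_+$ and $\pm q\to0$, this gives $u^\pm(t-t^\e,v)\to\chi_{\Gamma_t}(v)$.

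The main obstacle is the computation on $\Omega_{d_0}$ in \eqref{Lem_Prop_subsuper:vs}: this is where the precise forms of $p(t)$ and $q(t)$ are forced, where the $O(\e^{-1})$ reaction and diffusion terms must be made to cancel against each other via \eqref{U}, and where the surviving $O(1)$ and $O(\e^{-1})q$ terms must be reorganised---through Lemma \ref{l:Up:LB}, the ODE $q'+\tfrac{\beta}{2\e}q=\tfrac32$, and the smallness of $\sigma,\e$---into a quantity bounded below by $1$. The regions away from $\Gamma_t$ are conceptually simpler but still require the exponential tail bounds of Lemma \ref{l:U:tails}, because there $|\nabla d|\ne1$ and \eqref{U} cannot be used directly.
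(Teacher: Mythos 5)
Your proposal is correct and follows essentially the same route as the paper: the same choice of $\sigma$ (as in \eqref{pfzy}), the same splitting of $Q_T$ into the tubular neighbourhood where \eqref{U} cancels the $O(\e^{-1})$ terms and the far region where Lemma \ref{l:U:tails} makes everything exponentially small, the same use of Lemma \ref{l:Up:LB} to absorb $\Delta d + p'$, and the same appeal to Lemmas \ref{l:upm:0} and \ref{l:CP:Pe} for \eqref{Lem_Prop_subsuper:t0}. The only (harmless) variations are organizational: you package the $O(1)$ bookkeeping via the identity $q' + \tfrac{\beta}{2\e}q = \tfrac32$ instead of the paper's separate bounds $\tfrac1\e E_{-1} \ge -1$ and $E_0 \ge 2$, and in the far region you control the reaction term directly from $f' \le -\beta$ near $\alpha_\pm$ rather than reusing the global $E_0$ estimate.
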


\begin{proof}
The proof is a modification of the arguments used in \cite[Sections 4.2--4.4]{EFHPS-2}. We only prove the properties related to $u^+$; those for $u^-$ can be proven similarly. 

Recall $\beta$ from \eqref{beta}, let $L, \sigma_0 > 0$ be as in Lemma \ref{l:upm:0} and take 
\begin{equation} \label{pfzy}
  \sigma < \min \left\{ \sigma_0, \frac1{4 \| f'' \|_{C([0, 2 \alpha_+])}} \right\}.
\end{equation}
First we prove \eqref{Lem_Prop_subsuper:lim}. Note that 
\[
  u^+(t - t^\e,v) 
	= U
	\left(
		\frac{d(t - t^\e,v)}\e - p(t - t^\e)
	\right)
	+ q(t - t^\e),
\] 
For the second term in the right-hand side, we obtain from $t - t^\e \to t > 0$ as $\e \to 0$ that $q(t - t^\e) \to 0$ as $\e \to 0$. For the first term, we observe from $L \leq p(t - t^\e) \leq C$ 
and $d(t - t^\e,v) \to d(t,v) \neq 0$ as $\e \to 0$ that 
\[
  U
	\left(
		\frac{d(t - t^\e,v)}\e - p(t - t^\e)
	\right) \to \chi_{\Gamma_t}(v)
\]
as $\e \to 0$. Then, \eqref{Lem_Prop_subsuper:lim} follows. 

Second, we prove \eqref{Lem_Prop_subsuper:vs}. In view of \eqref{cL} we set $\xi \equiv \frac d\e - p$ and compute and expand
\begin{align*}
  \partial_t u^+ 
  &= U' (\xi) \left( \frac{\partial_t d}\e - p' \right) + q', \\
  \e \Delta u^+
  &= \div \big( U' (\xi) \nabla d \big)
  = U'' (\xi) \frac{|\nabla d|^2}\e + U' (\xi) \Delta d
  , \\
  \frac1\e f(u^+) 
  &= \frac1\e f(U(\xi))  + \frac q\e f'(U(\xi)) + \frac{q^2}{2 \e} f''(U(\xi) + h q)
\end{align*}
for some $h \in (0,1)$ given by Taylor's Theorem applied to $f$ at $U(\xi)$. Substituting $U'' = -c_* U' - f(U)$, we obtain from \eqref{cL} that
\begin{align*}
  \mathcal L^\e u^+ 
  &= \frac1\e \Big( U' (\xi) \partial_t d + \big( c_* U'(\xi) + f(U(\xi)) \big) |\nabla d|^2 - f(U(\xi)) \Big) \\
  &\quad + \Big( -U' (\xi) (\Delta d + p') + q' - \frac q\e f'(U(\xi)) - \frac{q^2}{2 \e} f''(U(\xi) + h q) \Big) \\
  &\equiv \frac1\e E_{-1} + E_0. 
\end{align*} 

First we show that $E_{-1} \geq - \e$ for $\e$ small enough. On $\Omega_{d_0}$ it holds that $d = \overline d$, and thus $|\nabla d| = 1$. Then, it follows from \eqref{dt:PDE} that $E_{-1} = 0$ on $\Omega_{d_0}$. On $Q_T \setminus \Omega_{d_0}$ we have by \eqref{dt:est}--\eqref{nabd:0} that $|\nabla d| \leq 1$, $|\partial_t d| \leq c_*$ and $|\xi| \geq \frac{d_0}{2\e}$ for $\e$ small enough. Then, estimating 
\[
  |f(U(\xi))| 
  \leq |f(\alpha_\pm)| + |U(\xi) - \alpha_\pm| \| f' \|_{C([\alpha_-, \alpha_+])}
  = |U(\xi) - \alpha_\pm| \| f' \|_{C([\alpha_-, \alpha_+])}
\]
and using the bounds on $U, U'$ in Lemma \ref{l:U:tails}, we obtain by estimating all four terms of $E_{-1}$ independently that
\[
  \frac1\e E_{-1} \geq -\frac C\e e^{-\lambda \tfrac{d_0}{2\e}} \geq -1
  \qquad \text{on } Q_T \setminus \Omega_{d_0}
\]
for some constants $\lambda, C > 0$ and for $\e$ small enough. 

It is left to show that $E_0 \geq 2$. By the definition of $C_{\Delta d}$, we obtain
\[
  \Delta d + p' = \Delta d + C_{\Delta d} + \frac q{\sigma \e} \geq \frac q{\sigma \e},  
\]
which is positive. Together with Lemma \ref{l:Up:LB}, we obtain for the first term in $E_0$
\[
  -U' (\xi) (\Delta d + p') 
  \geq \big( \beta + f'(U(\xi)) \big) \frac q\e
  = 3 + \frac{2 \sigma \beta^2}\e e^{- \tfrac{\beta}{2 \e} t} + f'(U(\xi)) \frac q\e.
\]
Note that the third term in the right-hand side cancels with the third term of $E_0$. The second term of $E_0$ is given by
\[
  q' = - \frac{\sigma \beta^2}\e e^{- \tfrac{\beta}{2 \e} t},
\]
and the fourth term is estimated from below by (using \eqref{pfzy} and taking $\e$ small enough)
\[
  -\frac{q^2}{2 \e} f''(U(\xi) + h q) 
  \geq - \frac{ \| f'' \|_{C([0, 2 \alpha_+])} } \e \left( 4 \sigma^2 \beta^2 e^{- \tfrac{ \beta }{ \varepsilon } t} + \frac{9 \e^2}{\beta^2} \right)
  \geq - \frac{\sigma \beta^2}\e e^{- \tfrac{ \beta }{ \varepsilon } t} - 1.
\]
By substituting these estimates in the expression of $E_0$, we obtain $E_0 \geq 2$. This completes the proof of \eqref{Lem_Prop_subsuper:vs}.

Now, \eqref{Lem_Prop_subsuper:t0} is obvious: \eqref{Lem_Prop_subsuper:vs} implies that $u^\pm(t - t^\e,v)$ are sub and super solutions for Problem $(P^\varepsilon)$ for $t \in [t^\e,T]$, \eqref{l:upm:0:ue} demonstrates that \eqref{Lem_Prop_subsuper:t0} holds at $t = t^\e$, and thus \eqref{Lem_Prop_subsuper:t0} follows from the comparison principle Lemma \ref{l:CP:Pe}.
\end{proof}

\begin{rem}  \label{rem:2.2}\rm
In \eqref{Lem_Prop_subsuper:vs} the constant $1$ has no particular meaning; any positive constant can be obtained by choosing $p,q$ properly.
\end{rem}

\subsection{Propagation of the interface of $u^N$}
\label{s:pf:PDE:prop:uN}

We rely on Lemma \ref{l:prop:ue} to prove the discrete counterpart of \eqref{Lem_Prop_subsuper:t0}. We use $\sqrt K = \frac1\e$ and recall $\theta$ from \eqref{phi}.

\begin{thm}  \label{t:prop:uN}
Let $\b, \si, L >0$ be as in Lemma \ref{l:prop:ue}, and let $K=K(N) \to \infty$ with $K =o(N^\theta)$ as $N \to \infty$. Then,
there exists $N_0\in \N$ such that
\begin{equation*} 
u^- \Big( t - t^N, \frac xN \Big) 
\le u^N ( t,  x ) 
\le u^+ \Big( t - t^N, \frac xN \Big)
\end{equation*} 
holds for all $N\ge N_0$, all $t\in [t^N, T]$ and all $x \in \T_N^d$.
\end{thm}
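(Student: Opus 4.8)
The plan is to transfer the super/sub solution property of $u^\pm$ for the continuous problem \eqref{Pe} to its discretization \eqref{PNK}, and then conclude with the discrete comparison principle Lemma \ref{l:CP:PNK}. Put $\e = 1/\sqrt K$ and define $v^\pm(t,x) \equiv u^\pm(t - t^N, x/N)$ for $(t,x) \in [t^N,T] \times \T_N^d$, with $u^\pm$ the functions from Lemma \ref{l:prop:ue}; since $v^\pm(t,\cdot)$ is a function on $\T_N^d$, $\Delta^N v^\pm(t,\cdot)$ is well defined. Writing $\mathcal{L}_N^K u \equiv \partial_t u - \tfrac1{\sqrt K} \Delta^N u - \sqrt K f(u)$ for the operator associated with \eqref{PNK}, a direct computation using $\e = 1/\sqrt K$ gives
\[
  \mathcal{L}_N^K v^\pm(t,x) = \big( \mathcal{L}^\e u^\pm \big)(t - t^N, x/N) + \tfrac1{\sqrt K} \Big( \Delta u^\pm(t - t^N, x/N) - \Delta^N v^\pm(t,x) \Big).
\]
By \eqref{Lem_Prop_subsuper:vs} we have $\mathcal{L}^\e u^- \le -1$ and $\mathcal{L}^\e u^+ \ge 1$ on $[0,T] \times \T^d$, and for $t \in [t^N,T]$ the argument $t - t^N$ lies in $[0, T - t^N] \subseteq [0,T]$; moreover $\e = 1/\sqrt K < \e_0$ once $N$ is large, so Lemma \ref{l:prop:ue} applies. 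Hence it suffices to bound the Laplacian discretization error by $1$ for $N$ large.

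The core estimate is the standard Taylor bound: for any $g \in C^4(\T^d)$,
\[
  \big| \Delta^N[ g(\cdot/N) ](x) - \Delta g(x/N) \big| \le \frac{C}{N^2} \, \| D^4 g \|_{L^\infty(\T^d)},
\]
coming from the second-order expansion of the centered second difference. Apply this with $g = u^\pm(t - t^N, \cdot) = U\big( d(t - t^N, \cdot)/\e \mp p(t - t^N) \big) \pm q(t - t^N)$. Differentiating the ODE in \eqref{U} and using the boundedness of $U, U'$ from Lemma \ref{l:U:tails} shows that $U \in C^4(\R)$ with all derivatives up to order four bounded; since $d \in C^4(\overline{Q_T})$ (so its spatial derivatives up to order four are bounded there), the chain rule gives $\| D^4 u^\pm(t', \cdot) \|_{L^\infty(\T^d)} \le C \e^{-4} = C K^2$ uniformly for $t' \in [0,T]$, because each spatial derivative of $U(d/\e - p)$ produces one extra factor $\e^{-1}$ and at most four appear, while the shifts $p, q$ contribute nothing to spatial derivatives. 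Therefore the error term above is bounded by $\tfrac1{\sqrt K} \cdot C K^2 N^{-2} = C K^{3/2} N^{-2}$. Since $K = o(N^\theta)$ with $\theta \in (0,\tfrac23)$ (recall \eqref{phi}), we get $K^{3/2} N^{-2} = o(N^{3\theta/2 - 2}) \to 0$; hence for all $N$ large $\mathcal{L}_N^K v^+ \ge 1 - o(1) \ge 0$ and $\mathcal{L}_N^K v^- \le -1 + o(1) \le 0$, i.e.\ $v^+$ and $v^-$ are super and sub solutions to \eqref{PNK} on $[t^N, T]$.

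Finally, at $t = t^N$ we have $v^\pm(t^N,x) = u^\pm(0, x/N)$, so the second assertion of Lemma \ref{l:upm:0} furnishes some $N_0$ with $v^-(t^N,x) \le u^N(t^N,x) \le v^+(t^N,x)$ for all $N \ge N_0$ and all $x \in \T_N^d$. Since \eqref{PNK} is autonomous we may translate time by $t^N$ and apply Lemma \ref{l:CP:PNK} on $[0, T - t^N]$ twice: once with $u^N(\cdot + t^N, \cdot)$ as super solution and $v^-(\cdot + t^N, \cdot)$ as sub solution, and once with $v^+(\cdot + t^N, \cdot)$ as super solution and $u^N(\cdot + t^N, \cdot)$ as sub solution. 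This yields $v^-(t,x) \le u^N(t,x) \le v^+(t,x)$ for all $t \in [t^N, T]$ and $x \in \T_N^d$, which is the claim. The only genuinely technical point is the bound $\| D^4 u^\pm \| \le C K^2$ and the resulting requirement $K^{3/2} = o(N^2)$; this is routine, and moreover far weaker than the constraint $K = o(N^\theta)$ already imposed by the generation phase and by Lemma \ref{l:upm:0}, so it poses no real obstacle.
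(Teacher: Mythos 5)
Your proposal is correct and follows essentially the same route as the paper: decompose $\mathcal{L}^N$ applied to the projected $u^\pm$ into $\mathcal{L}^\e u^\pm$ (controlled by \eqref{Lem_Prop_subsuper:vs}) plus the Laplacian discretization error, show the latter vanishes under $K = o(N^\theta)$, and conclude via Lemma \ref{l:upm:0} and the discrete comparison principle Lemma \ref{l:CP:PNK}. The only (immaterial) difference is that you use the second-order accurate bound $O(N^{-2})\|u^+\|_{C^4} \sim K^{3/2}/N^2$ for the central-difference error, whereas the paper uses the cruder $O(N^{-1})\|u^+\|_{C^3} \sim K/N$; both are $o(1)$ under the stated hypothesis.
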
 

\begin{proof}
The upper and lower bound can be proven similarly; we focus on the upper bound. By the bounds at initial time given in Lemma \ref{l:upm:0} and the comparison principle Lemma \ref{l:CP:PNK}, it is sufficient to show that $u^+$ projected onto the discrete torus $\T_N^d$, i.e.\ $\overline u^+(t,x) \equiv u^+(t, \frac xN)$, is a super solution to \eqref{PNK}, i.e.
\begin{align}  \label{eq:7.3.1}
\mathcal{L}^N \overline u^+ 
\equiv \partial_t \overline u^+ - \frac1{\sqrt K} \De^N \overline u^+ - \sqrt K f(\overline u^+) \ge 0 \quad \text{on } (0,T) \times \T_N^d
\end{align}
for all $N$ large enough.  

To prove \eqref{eq:7.3.1}, we set $\e = \frac1{\sqrt K}$ and decompose
\begin{align}  \label{eq:7.3.2}
 \mathcal{L}^N \overline u^+ (t,x) 
 = \mathcal{L}^\e u^+ \Big( t, \frac xN \Big)
+ \frac1{\sqrt K} \Big( \De u^+ \Big( t, \frac xN \Big) - \De^N \overline u^+ (t,x) \Big),
\end{align}
where 
$\mathcal{L}^\e$ is as in \eqref{cL}.  By Lemma \ref{l:prop:ue}, $\mathcal{L}^\e u^+ (t, \frac xN) \geq 1$. For the second term in \eqref{eq:7.3.2}, since $u^+(t, \cdot) \in C^4 (\T^d)$ and since $\De^N \overline u^+ (t,x)$ is the central difference approximation of $\De u^+ ( t, \frac xN )$, 
we have (recalling that $L \leq p(t) \leq C$) 
\begin{align*}
\frac1{\sqrt K} \Big| \De u^+ \Big( t, \frac xN \Big) - \De^N \overline u^+ (t,x) \Big|
\leq \frac C{\sqrt K N} \| u^+(t, \cdot) \|_{C^3(\T^d)}
\leq C' \frac{K}{N}.  
\end{align*}
Since $K = o(N^\theta) = o(N^{2/3})$, this contribution to \eqref{eq:7.3.2} vanishes as $N \to \infty$. This completes the proof.
\end{proof}

\subsection{Proof of Theorem \ref{t:PDE}} 
\label{s:pf:PDE:pf} 

Let $t \in (0,T]$ and  $v\in\T^d\setminus \Gamma_t$ be arbitrary. Let $N$ be large enough such that $t > t^N$. Let $x^N\in \T_N^d$ be such that $v \in B(\frac{x^N}N, \frac1{2N})$ for all $N \in \N$, and note from \eqref{uNhat} that $\hat u^N(t,v) = u^N(t,x^N)$ and that $| v - \frac{x^N}N | \leq \frac CN$ for all $N \in \N$, where $C$ depends on the dimension $d$. Let $u^\pm$ be given by \eqref{upm} with constants $\b, \si, L >0$ as in \eqref{beta} and Lemma \ref{l:prop:ue}. By Theorem \ref{t:prop:uN} we have 
\begin{align} \label{pfzx}
  u^- \Big( t - t^N, \frac{x^N}N \Big) 
\le u^N ( t,  x^N ) 
\le u^+ \Big( t - t^N, \frac{x^N}N \Big)
\end{align}
for $N$ large enough. For the left- and right-hand side we note that 
\begin{align*}
  \Big| u^\pm \Big( t - t^N, \frac{x^N}N \Big) - u^\pm(t - t^N,v) \Big|
  &\leq \Big| \frac{x^N}N - v \Big| \| \nabla u^\pm(t - t^N,\cdot) \|_{C(\T^d)} \\
  &\leq \frac CN \| U' \|_{C(\R)} \sqrt K \| \nabla d \|_{C(Q_T)}
  \leq C' \frac{\sqrt K}N
\end{align*}
vanishes as $N \to \infty$. Hence, Theorem \ref{t:PDE} follows from \eqref{Lem_Prop_subsuper:lim} by passing to the limit $N \to \infty$ in \eqref{pfzx}.
\hfill \qed

\end{document}